\newtheorem{definition}{Definition}
\newtheorem{theorem}{Theorem}
\newtheorem{lemma}{Lemma}
\newtheorem{corollary}{Corollary}
\newtheorem{proposition}{Proposition}
\newtheorem{remark}{Remark}
\newtheorem{example}{Example}
\numberwithin{equation}{section}
\newtheorem{Remark}{Remark}
\title[]{Generalized Evolution Semigroups and $h-$Dichotomies for Evolution Families on Banach Spaces}
\author[]{\'Alvaro Casta\~neda}
\author[]{Ver\'onica Poblete}
\author[]{Gonzalo Robledo}
\address{Universidad de Chile, Departamento de Matem\'aticas. Casilla 653, Santiago, Chile}
\email{castaneda@uchile.cl, vpoblete@uchile.cl,  grobledo@uchile.cl }
\subjclass[2020]{34G10, 34D09, 47D06}
\keywords{}
\thanks{The first and third authors were funded by FONDECYT Regular Grant 1240361.}
\begin{document}

\begin{abstract}
    This paper develops a comprehensive theory generalizing exponential decay patterns for evolution processes in Banach spaces. We replace classical exponential bounds with more flexible decay rates governed by an increasing homeomorphism $h$. The core of our approach lies in constructing particular group structures induced by $h$, which allow us to define generalized semigroups on function spaces. We prove that these $h$-semigroups are equivalent to classical evolution semigroups through a natural transformation. Our main result establishes that three fundamental concepts are equivalent: hyperbolicity of the generalized semigroup, dichotomy of the underlying evolution process, and a spectral condition on the generator. This work extends classical dichotomy theory to encompass a wider class of decay patterns, providing new tools for analyzing asymptotic behavior in dynamical systems.
\end{abstract}

\maketitle

\section{Introduction}

\subsection{Preliminaries} The property of exponential dichotomy  
for evolution families $\{U(t,s)\}_{t\geq s}$ on a Banach space $(X,||\cdot||_{X})$ has been widely studied in the literature \cite{Henry-0,Henry,LRS,LM, Pazy,  VQP, SS}. A noteworthy consequence is the existence of an associated $C_{0}$--semigroup $\{T_{t}\}_{t\geq 0}$ together with its corresponding infinitesimal generator $G$. A classical result \cite[Th. 3.17 and Th. 4.25]{CL} and \cite[Th. VI.9.18]{EN} establishes an equivalence
between three distinguished properties:

\begin{equation}
\label{EQ}
\left\{\begin{array}{ll}
\bullet & \textnormal{The exponential dichotomy of the family $\{U(t,s)\}_{t\geq s}$},\\ 
\bullet & \textnormal{the hyperbolicity of $\{T_{t}\}_{t\geq 0}$},\\
\bullet & \textnormal{a geometrical property of the resolvent $G$, namely},\\
& \textnormal{its intersection with the imaginary axis is empty}.
\end{array}\right.
\end{equation}

An essential assumption of this equivalence is that the evolution family is \textit{exponentially bounded}, that is there exist $K\geq 1$ and $\alpha>0$ such that
\begin{equation}
\label{decroissance}
||U(t,s)||\leq Ke^{\alpha(t-s)} \quad \textnormal{for any $t,s\in \mathbb{R}\times \mathbb{R}$ with $t\geq s$}.
\end{equation}

The main goal of this article is to study the above equivalences --in a generalized framework-- under the existence of bounds encompassing (\ref{decroissance}), namely,
\begin{equation}
\label{decroissance-bis}
||U(t,s)||\leq K\left(\frac{h(t)}{h(s)}\right)^{\alpha}
\quad  \textnormal{for any $t,s\in \mathbb{R}\times \mathbb{R}$ with $t\geq s$},
\end{equation}
where $h\colon \mathbb{R}\to \mathbb{R}^{+}$ is a strictly increasing homeomorphism generalizing the exponential function. The idea behind to consider these maps $h(\cdot)$ is to allow the rates of contraction and expansion to be determined by general growth functions describing the behavior of solutions of differential equations and has been developed in a systematic way in the works of Martin \cite{Martin}, Muldowney~\cite{Muldowney} and Naulin–Pinto~\cite{NaulinPinto}.

In order to relate the bounds (\ref{decroissance}) and (\ref{decroissance-bis}) we will recall
the tough--provoking article of Pe\~na and Rivera--Villagr\'an \cite{JFP}, where the authors noticed -- in a slighlty different context-- that the
change of time variables
\begin{equation}
\label{chv}
t=h^{-1}(e^{\tilde{t}}) \quad \textnormal{and} \quad \tilde{t}=\ln(h(t)),
\end{equation}
transforms (\ref{decroissance-bis}) as follows:
\begin{equation}
\label{decroissance-3}
||U(h^{-1}(e^{\tilde{t}}),h^{-1}(e^{\tilde{s}}))||\leq K e^{\alpha(\tilde{t}-\tilde{s})}.
\end{equation}

Furthermore, a noteworthy remark of \cite{JFP} was to explain that the above transformation (\ref{chv}) does not trivially transforms the general bound (\ref{decroissance-bis}) into an exponential one. In fact the change of variables reveals something much more subtle, namely, a commutative diagrams of isomorphisms of topological groups
such that
\[
\begin{tikzcd}
(\mathbb{R},+) \arrow{r}{e}  & (\mathbb{R}^{+},\cdot) \arrow{d}{h^{-1}} \\
& (\mathbb{R},*_{_h})
\end{tikzcd}
\quad \textnormal{and} \quad
\begin{tikzcd}
(\mathbb{R},*_{_h}) \arrow{r}{h} & (\mathbb{R}^{+},\cdot) \arrow{d}{\ln} \\
& (\mathbb{R},+),
\end{tikzcd}
\]
where the topological abelian group $(\mathbb{R},*_{_h})$ -- a formal definition will be given later -- is such that the isomorphism $h\colon  (\mathbb{R},\ast_{h})\to (\mathbb{R}^+, \cdot)$ verifies $h(t\ast s)=h(t)h(s)$,
which mimics the exponential identity $e^{t+s}=e^te^s$. Moreover, as an abelian group is a $\mathbb{Z}$--module,
we note that its external composition law can be written in terms of $*_{h}$ and will be denoted by $\odot$.

Motivated by \eqref{decroissance-3}, it is natural to ask what happens when the asymptotic behaviour of
an evolution family is not governed by a fixed exponential rate, but rather by a more
general growth rate. In particular, we would like to replace the classical weight
$e^{(t-s)}$ by ratios of the form $h(t)/h(s)$, and to study the resulting
notions of stability and dichotomy in a semigroup framework adapted to this new time
scale.

To this end, we fix a map $h(\cdot)$ and endow $\mathbb{R}$ with the induced
operations $*_{_h}$ and $\odot$, obtaining a one-dimensional normed vector space
$(\mathbb{R},*_{_h},\odot)$ with neutral element $e_*$. 
Families
$U = \{U(t,s)\}_{t\ge s}$ are then indexed by $(\mathbb{R}_*,*_{_h},e_*)$. The family $U$ is called $h-$ evolution family,  and its growth and decay 
are measured with respect to the $h$–time rather than the classical
linear time. This leads naturally to the concept of \emph{$h$-dichotomy}, where the
role of the exponential weights is played by the ratio $h(t)/h(s)$.

In this context, we can observe a growing interest in dichotomies and decays beyond the exponential case, namely, the work
of Megan, Sasu and Sasu \cite{MSS} devoted to nonuniform exponential dichotomy and the works of Lupa and Popescu \cite{Lupa2,Lupa} focused in a generalized exponential dichotomy. With respect to nonexponential decays we refer to the seminal and tough provoking work of Borichev and Tomilov \cite{BT} which provided conditions ensuring a polynomial decay.

\medskip

\subsection{Structure and Novelties of the article} 

\medskip

The present paper is organized as follows. In 
Section 2 we revisit the abelian group $(\mathbb{R},*_{_h})$ which is totally ordered allowing
to define an absolute value $|\cdot |_*$. Moreover, as a novelty, we prove that the vector space $(\mathbb{R},*_{_h},\odot)$ is complete,
denoting it as $\mathbb{R}_* = (\mathbb{R},*_{_h},\odot, | \cdot |_*)$.

In Section~3 we regard $C(\mathbb{R}_*,X)$ as a normed vector space and introduce the notion
of an $h$-semigroup acting on $C(\mathbb{R}_*,X)$. Given an $h$-evolution family $U$ on $X$,
we associate to it an operator family $\{T_t\}_{t\ge e_*}$, indexed by $(\mathbb{R}_*,*_{h})$, via
\[
(T_tu)(s) := U\bigl(s,\,s*_{h}t^{*-1}\bigr)\,u\bigl(s*_{h}t^{*-1}\bigr),
\qquad u\in C(\mathbb{R}_*,X),\ s\in\mathbb{R}_*,
\]
we then show (cf. Lemma \ref{C0semigroup}) that, if $U$ satisfies \eqref{decroissance-bis}, the family
$\{T_t\}_{t\ge e_*}$ forms a so called $h$-semigroup on $C(\mathbb{R}_*,X)$.

A first step to generalize the equivalences (\ref{EQ}) is provided by Theorem~\ref{AgenB}, which gives a precise
characterization of the generator of the $h$-semigroup $\{T_t\}_{t\ge e_*}$. As a
consequence, we obtain detailed spectral information for this $h-$semigroup. The proof
relies on the construction of a classical $C_0$-semigroup associated with the original
$h$-evolution family $U$, and on a conjugacy map that transfers spectral properties
between the classical and the $h$-reparametrized settings.

In Section~4, as a second step to address the generalization of the equivalences (\ref{EQ}), we develop the semigroup approach further by relating the notion of
$h$-dichotomy to the spectral theory of the associated evolution $h$-semigroup on
$C_0(\mathbb{R}_*,X)$. In particular, we show that $h$-dichotomy of $U$ is equivalent
to the invertibility of the generator $B_h$ of $\{T_t\}_{t\ge e_*}$ together with the
spectral gap condition
\[
\sigma(B_h)\cap i\mathbb{R}=\varnothing.
\]
This yields an operator-theoretic characterization of $h$-dichotomy which extends, to
the $h$-framework, the classical results for evolution families on the half-line; see \ Theorem~\ref{h-dichotomy}
which states the equivalence between the following properties
\begin{equation*}
\left\{\begin{array}{ll}
\bullet & \textnormal{The $h$--dichotomy of the family $\{U(t,s)\}_{t\geq s}$},\\ 
\bullet & \textnormal{the hyperbolicity of the $h$--semigroup $\{T_{t}\}_{t\geq e_{*}}$},\\
\bullet & \textnormal{the geometrical property $\sigma(B_h)\cap i\mathbb{R}=\varnothing$}.\\
\end{array}\right.
\end{equation*}

\section{Algebraic Preliminaries and contextualization}
We begin by recalling the notion of a growth rate, and then introduce the algebraic
and topological structures it induces on the real line. These constructions provide
the basic framework and tools that will support the rest of the paper.

\begin{definition}
    We will say that the function $h: \mathbb{R} \to \mathbb{R}^+$ is a growth rate if $h$ is a strictly increasing homeomorphism.  
\end{definition}

Some examples of growth rates are given by $h(t)=e^{t},
    \ h(t)=e^{(t-t_{0})^{n}}$ for any odd $n\in \mathbb{Z}^{+}$ and
$h(t)=t+\sqrt{t^2+1}$.

\medskip
The growth rates allows to define the following laws of composition on $\mathbb{R}$:
\begin{equation}
\label{LCI}
\begin{array}{rcl}
\mathbb{R} \times \mathbb{R} & \to &  \mathbb{R}  \\
(t,s) &\mapsto &  t\ast_{_h} s:=h^{-1}\left(h(t)h(s)\right),
\end{array} 
\end{equation}
and
\begin{equation}
\label{LCE}
\begin{array}{rcl}
\mathbb{R} \times \mathbb{R} & \to &  \mathbb{R}  \\
(\alpha,t) &\mapsto &  \alpha \odot t:=h^{-1}\left(h(t)^{\alpha}\right),
\end{array} 
\end{equation}
which are well defined since $h(t)^\alpha>0$ for every $\alpha\in\mathbb R$.

\begin{remark}
Notice that, when considering $h(t)=e^{t}$, the above composition laws becomes
$(t,s)\mapsto t+s$ and $(\alpha,t)\to \alpha t$, namely, the classical addition and
the scalar multiplication in $(\mathbb{R},+,\cdot)$.
\end{remark}


\subsection{Abelian ordered group induced by the growth rate $h$}
The composition law (\ref{LCI}) allows to state the following result, which has been stated without proof in \cite[Proposition 1]{EPR} and \cite[Sect.3]{JFP}: 
\begin{proposition}
\label{Groupe}
The pair $(\mathbb{R},\ast_{_h})$ is an abelian group where the unit element and the inverse for any $t \in \mathbb{R}$ are respectively defined by: 
\begin{equation}
\label{inverse}
e_{*}:=h^{-1}(1) \quad \textnormal{and} \quad
t^{*-1}:= h^{-1}\left( \frac{1}{h(t)}\right).
\end{equation}
\end{proposition}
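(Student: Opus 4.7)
The plan is to observe that the operation $\ast_{_h}$ is engineered precisely so that $h$ becomes a group homomorphism from $(\mathbb{R}, \ast_{_h})$ onto the multiplicative group $(\mathbb{R}^{+}, \cdot)$, and then to transfer the abelian group structure across this bijection. First I would record the fundamental intertwining identity
\[
h(t \ast_{_h} s) \;=\; h(t)\,h(s),
\]
which is immediate from the definition of $\ast_{_h}$ in \eqref{LCI}, since $h \circ h^{-1}$ is the identity on $\mathbb{R}^{+}$. The operation is well-defined because $h(t)h(s) > 0$ always lies in the codomain $\mathbb{R}^{+}$ of $h$. Because $h$ is a strictly increasing homeomorphism, it is in particular a bijection onto $\mathbb{R}^{+}$; combined with the displayed identity, this tells us that $h$ is an isomorphism of magmas between $(\mathbb{R}, \ast_{_h})$ and $(\mathbb{R}^{+}, \cdot)$.

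From here the group axioms follow by a routine transfer along $h$. For associativity and commutativity, I would apply $h$ to both sides of the desired equalities, reduce to the corresponding identities in $(\mathbb{R}^{+}, \cdot)$ — which hold because ordinary multiplication of positive reals is associative and commutative — and then invoke injectivity of $h$ to cancel it and conclude. For the identity element, the candidate $e_{*} = h^{-1}(1)$ is forced, since $h(t \ast_{_h} e_{*}) = h(t) \cdot 1 = h(t)$, and injectivity of $h$ yields $t \ast_{_h} e_{*} = t$. Similarly, the candidate $t^{*-1} = h^{-1}(1/h(t))$ satisfies
\[
h\bigl(t \ast_{_h} t^{*-1}\bigr) \;=\; h(t) \cdot \frac{1}{h(t)} \;=\; 1 \;=\; h(e_{*}),
\]
so that $t \ast_{_h} t^{*-1} = e_{*}$.

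No substantive obstacle arises in this argument: the whole content is that the abelian group structure of $(\mathbb{R}^{+}, \cdot)$ is transported bijectively to $\mathbb{R}$ through the homeomorphism $h$. The only conceptual point worth flagging is that the formulas for $e_{*}$ and $t^{*-1}$ given in \eqref{inverse} are not ad hoc but are uniquely determined by the requirement that $h$ intertwine $\ast_{_h}$ with multiplication on $\mathbb{R}^{+}$; once this intertwining is set up, injectivity of $h$ does all the remaining bookkeeping.
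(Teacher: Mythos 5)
Your proposal is correct and follows essentially the same route as the paper: both arguments reduce every group axiom to the corresponding identity in $(\mathbb{R}^{+},\cdot)$ via the relation $h(t\ast_{_h}s)=h(t)h(s)$ (which the paper records separately as \eqref{emulation}), the only difference being that you package this as a transfer of structure along the bijection $h$ and invoke injectivity once, whereas the paper writes out the expressions $h^{-1}(h(\cdot)h(\cdot))$ explicitly at each step. No gap; the verification of well-definedness, the identity $e_{*}=h^{-1}(1)$, and the inverse $t^{*-1}=h^{-1}(1/h(t))$ all match the paper's computations.
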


\begin{proof}
Notice that the operation (\ref{LCI}) is commutative
\begin{displaymath}
\begin{array}{rcl}
t \ast_{_h}s =  h^{-1}(h(t)h(s))
= h^{-1}(h(s)h(t))
=  s\ast_{_h}t,
\end{array}
\end{displaymath}
then we only need to verify the group axioms.

Firstly, we can see that (\ref{LCI}) allows an easy verification of the associative property
    \begin{displaymath}
\begin{array}{rcl}
\medskip
(t\ast_{_h}s)\ast_{_h}r & =& h^{-1}(h(t)h(s))\ast_{_h}r
= h^{-1}\left(h(h^{-1}(h(t)h(s)))h(r)\right)\\
\medskip
&=& h^{-1}(h(t)h(s)h(r)) \\
\medskip
&=& h^{-1}(h(t)h\{h^{-1}(h(s)h(r))\}) \quad \textnormal{since $h(s)h(r)=h(h^{-1}(h(s)h(r)))$}\\
\medskip
&=& t\ast_{_h}h^{-1}(h(s)h(r))
= t\ast_{_h}(s\ast_{h} r). 
\end{array}
\end{displaymath}

By using (\ref{LCI}) and $e_{*}$ from (\ref{inverse}) we have that
\begin{displaymath}
t\ast_{_h}e_{*} = h^{-1}(h(t)h(e_{*}))=h(h^{-1}(e_{*}))=t \quad \textnormal{for any $t\in \mathbb{R}$} 
\end{displaymath}
and it follows that $e_{*}$ is the unit element.

By using again (\ref{LCI}) and $t^{*-1}$ from (\ref{inverse}) we have that
\begin{displaymath}
t\ast_{_h}t^{*-1} = h^{-1}(h(t)h(t^{*-1}))= h^{-1}\left(\frac{h(t)}{h(t)}\right)= h^{-1}(1)=e_{*},
\end{displaymath}
and the result follows.
\end{proof}

In order to illustrate the previous abstract constructions, we now consider a concrete choice of
growth rate $h$. This example shows explicitly how the induced group operation $*$, the neutral
element $e_*$ and the inverse $t^{*-1}$ can be computed, and how the usual real line is
reparametrized around a new base point.

\begin{example}\label{ex 1}
    If $h(t)=e^{(t-2)^{3}}$ 
we deduce that $h^{-1}(t)=2+\sqrt[3]{\ln(t)}$. Then  $e_{*}=2$ and $t^{*-1}=4-t.$
\end{example} 

\begin{remark}
As we have said, the above result has been stated without proof in \cite{EPR} and \cite{JFP}
encompassing Proposition \ref{Groupe} since consider a general growth rate $h\colon J\to (0,+\infty)$, where $J=(a_{0},+\infty)$
and $a_{0}$ can be either a finite number or $-\infty$.
\end{remark}

A direct consequence from (\ref{LCI}) and (\ref{inverse}) is the pair of identities:
\begin{equation}
\label{emulation}
h(t\ast_{_h}s)= h(t)h(s) \quad \textnormal{and} \quad h(t^{\ast -1}) =  \frac{1}{h(t)},
\end{equation}
which also implies that
\begin{equation*}
(t\ast_{_h}s^{\ast-1})^{\ast -1}=s\ast_{_h}t^{\ast-1}.
\end{equation*}

\begin{remark}
As stated in \cite{EPR}, the following properties are a direct consequence of \eqref{LCI} combined
with the fact that $h(\cdot)$ and $h^{-1}(\cdot)$ are strictly increasing:
\begin{subequations}
  \begin{empheq}[left=\empheqlbrace]{align}
  &
t \leq s \quad \textnormal{if and only if} \quad u\ast t \leq u \ast s \quad \textnormal{for any $u\in \mathbb{R}$} \label{group4}, \\
&
t \leq s \quad \textnormal{if and only if} \quad t\ast u \leq s \ast u \quad \textnormal{for any $u\in \mathbb{R}$} \label{group4b}, \\
&
t \leq s \quad \textnormal{if and only if} \quad  s^{\ast-1}\leq t^{\ast-1} \label{group4c}, 
\end{empheq}
\end{subequations}
where $\leq$ denotes the classical order in $\mathbb{R}$. See also \eqref{group4c}.

A direct consequence from \eqref{group4} and \eqref{group4b} is that $(\mathbb{R},*,\leq)$ is a totally ordered group, we refer to \cite[p.7]{DNR}.
\end{remark}

Furthermore, the order $\leq$ is also characterized by the \textnormal{positive cone} $[e_{*},+\infty)$ in the sense that
$s\leq t$ if and only if $e_{*}\leq t *_{h} s^{*-1}$, and we refer to Lemma 2.1 from \cite{EPR} for details. When considering
the positive cone as a subgroup $([e_{*},+\infty))$, a well known topic of ordered abelian groups, see for example \cite[p.2]{Chiswell}, is the construction of the \textbf{absolute value} $|\cdot|_{\ast}\colon (\mathbb{R}, *_{_h}) \to ([e_{\ast},+\infty),*_{h})$:
\begin{equation}
\label{abs}
|t|_{\ast}=\left\{\begin{array}{ccr}
t &\textnormal{if}& e_{*}\leq t, \\
t^{\ast -1} &\textnormal{if}& \,\, t<e_{\ast}.
\end{array}\right.
\end{equation}

A byproduct of the above defined absolute value is given by the following results, whose
prove is identical to the standard absolute value.

\begin{lemma}{\label{desigTrian}}
The triangle inequality is satisfied:
\begin{equation}
\label{DT}
|t\ast_{_h}s|_{\ast} \leq |t|_{\ast} \ast_{_h}|s|_{\ast} \quad \textnormal{for any $t$ and $s$ in $(\mathbb{R},*)$}.
\end{equation}
\end{lemma}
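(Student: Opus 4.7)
The plan is to mimic the classical proof of the triangle inequality for absolute values in totally ordered abelian groups, transcribed into the group $(\mathbb{R},*_{_h})$ using the monotonicity properties \eqref{group4}--\eqref{group4c} together with the identity $(t*_{_h} s)^{*-1}=t^{*-1}*_{_h} s^{*-1}$, which follows at once from \eqref{emulation}.

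First I would establish the two trivial upper bounds
\begin{equation*}
t\le |t|_{*} \quad \textnormal{and} \quad t^{*-1}\le |t|_{*} \qquad \textnormal{for every }t\in\mathbb{R}.
\end{equation*}
Indeed, if $e_{*}\le t$ then $|t|_{*}=t$ and, by \eqref{group4c}, $t^{*-1}\le e_{*}^{*-1}=e_{*}\le t=|t|_{*}$; the case $t<e_{*}$ is symmetric. The analogous inequalities $s\le|s|_{*}$ and $s^{*-1}\le|s|_{*}$ hold for $s$.

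Next I would combine these pointwise bounds via the compatibility of $\le$ with $*_{_h}$. Applying \eqref{group4} and \eqref{group4b} twice yields
\begin{equation*}
t*_{_h} s \;\le\; |t|_{*}*_{_h}|s|_{*},
\end{equation*}
and, using $t^{*-1}\le|t|_{*}$ and $s^{*-1}\le|s|_{*}$ together with the identity $(t*_{_h} s)^{*-1}=t^{*-1}*_{_h} s^{*-1}$, also
\begin{equation*}
(t*_{_h} s)^{*-1} \;\le\; |t|_{*}*_{_h}|s|_{*}.
\end{equation*}
Since $|t*_{_h} s|_{*}$ coincides with $t*_{_h} s$ when $e_{*}\le t*_{_h} s$ and with $(t*_{_h} s)^{*-1}$ otherwise, one of the two displayed inequalities yields \eqref{DT} in either case.

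I do not foresee a real obstacle: the whole argument is a routine transcription of the standard proof, and the only point that requires some care is checking that the ``inverse distributes over $*_{_h}$'' identity and the monotonicity rules \eqref{group4}--\eqref{group4c} are applied in the correct order. A remark could be added recalling that everything ultimately rests on the fact that $h$ is a strictly increasing homeomorphism, so that the ordered group structure on $(\mathbb{R},*_{_h},\le)$ is genuinely isomorphic to $(\mathbb{R}^{+},\cdot,\le)$ via $h$, and the inequality \eqref{DT} is nothing but $|xy|\le|x|\,|y|$ in $\mathbb{R}^{+}$ pulled back by $h^{-1}$.
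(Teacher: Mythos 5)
Your argument is correct: the two pointwise bounds $t\le|t|_{*}$, $t^{*-1}\le|t|_{*}$, the identity $(t*_{_h}s)^{*-1}=t^{*-1}*_{_h}s^{*-1}$ from \eqref{emulation}, and the monotonicity rules \eqref{group4}--\eqref{group4c} combine exactly as you describe to give \eqref{DT} in both cases. The paper itself omits the proof, merely remarking that it is ``identical to the standard absolute value'' argument, and what you wrote is precisely that standard argument (your closing observation that \eqref{DT} is the pullback of $|xy|\le|x||y|$ on $(\mathbb{R}^{+},\cdot)$ under $h^{-1}$ is also a valid one-line alternative).
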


\begin{lemma}{\label{metrica}}
The function $d \colon (\mathbb{R},\ast_{_h}) \times (\mathbb{R},\ast_{_h}) \to   ([e_{\ast},+\infty),\ast_{_h})$ defined by
\begin{equation}
\label{distance}
d(t,s):=|t\ast_{_h}s^{\ast -1}|_{\ast}
\end{equation}
verifies the following properties:
\begin{itemize}
    \item[d1)] $d(t,s)=e_{\ast}$ if and only if $t=s$,
    \item[d2)] $d(t,s)=d(s,t)$,
    \item[d3)] $d(t,s)\leq d(t,u)\ast_{_h}d(u,s)$.
\end{itemize}    
\end{lemma}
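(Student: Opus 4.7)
The plan is to verify each of the three properties d1)--d3) by reducing them to algebraic identities in the group $(\mathbb{R},*_{_h})$ together with the characterization of the absolute value $|\cdot|_*$ given in \eqref{abs}, and then invoking the triangle inequality already established in Lemma \ref{desigTrian}. Each property is essentially the standard metric-axiom argument, rephrased with $*_{_h}$ in place of $+$ and $t^{*-1}$ in place of $-t$.

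For d1), I would first observe that $|x|_{*}=e_{*}$ forces $x=e_{*}$: indeed, if $e_{*}\le x$ then $|x|_{*}=x$, so $x=e_{*}$ directly; while if $x<e_{*}$, then $|x|_{*}=x^{*-1}=e_{*}$ forces $x=e_{*}^{*-1}=e_{*}$ by Proposition~\ref{Groupe}, contradicting $x<e_{*}$. Hence $d(t,s)=|t*_{_h}s^{*-1}|_{*}=e_{*}$ is equivalent to $t*_{_h}s^{*-1}=e_{*}$, and composing on the right with $s$ (and using associativity and the inverse identity from \eqref{inverse}) yields $t=s$.

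For d2), the key observation is that the inversion map is an anti-homomorphism---but since $(\mathbb{R},*_{_h})$ is abelian, we in fact have $(t*_{_h}s^{*-1})^{*-1}=s*_{_h}t^{*-1}$, which is precisely the displayed identity stated right after \eqref{emulation}. It therefore suffices to show that $|x^{*-1}|_{*}=|x|_{*}$ for every $x\in\mathbb{R}$. This is a direct case analysis using \eqref{abs} together with \eqref{group4c}: if $e_{*}\le x$ then $x^{*-1}\le e_{*}$, so $|x^{*-1}|_{*}=(x^{*-1})^{*-1}=x=|x|_{*}$; the symmetric case is analogous.

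Finally, d3) is the quickest step once d1)--d2) are in place. Writing $t*_{_h}s^{*-1}=(t*_{_h}u^{*-1})*_{_h}(u*_{_h}s^{*-1})$---which is valid by associativity, commutativity, and the cancellation $u^{*-1}*_{_h}u=e_{*}$---and applying the triangle inequality \eqref{DT} of Lemma~\ref{desigTrian} yields
\[
d(t,s)=|t*_{_h}s^{*-1}|_{*}\le |t*_{_h}u^{*-1}|_{*}*_{_h}|u*_{_h}s^{*-1}|_{*}=d(t,u)*_{_h}d(u,s).
\]
The only mildly subtle point in the whole argument is making sure the rewriting in d3) is justified by commutativity (so that the two factors can be rearranged into the canonical order before applying \eqref{DT}); this is the step I expect a careful reader to want spelled out, since everything else is a routine transcription of the classical metric axioms into the $*_{_h}$-language.
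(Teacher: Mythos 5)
Your proof is correct and matches what the paper intends: the paper omits the argument entirely, remarking only that it is ``identical to the standard absolute value,'' and your verification of d1)--d3) via the characterization of $|x|_{*}=e_{*}$, the identity $|x^{*-1}|_{*}=|x|_{*}$ combined with $(t\ast_{_h}s^{\ast-1})^{\ast-1}=s\ast_{_h}t^{\ast-1}$, and the decomposition $t\ast_{_h}s^{\ast-1}=(t\ast_{_h}u^{\ast-1})\ast_{_h}(u\ast_{_h}s^{\ast-1})$ feeding into Lemma~\ref{desigTrian} is exactly that standard argument transcribed to $(\mathbb{R},\ast_{_h})$.
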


\begin{lemma} 
\label{L4}
Given $L>e_{\ast}$ it follows that
\begin{equation*}
|u|_{\ast}\leq L  \iff L^{\ast-1}\leq u \leq L.
\end{equation*}
\end{lemma}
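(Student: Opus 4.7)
The plan is to prove both implications by a simple case analysis on the sign of $u$ relative to $e_*$, together with the inversion-reversing property \eqref{group4c}.

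First I would record the key preliminary observation: since $L > e_*$, applying \eqref{group4c} yields $L^{*-1} < e_*^{*-1} = e_*$, so the interval $[L^{*-1}, L]$ straddles the neutral element $e_*$. This is what will make the ``other side'' of each equivalence automatic.

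For the forward implication, assume $|u|_* \leq L$ and split on whether $u \geq e_*$ or $u < e_*$. In the first case, $|u|_* = u$ directly gives $u \leq L$, while $L^{*-1} < e_* \leq u$ furnishes the lower bound. In the second case, $|u|_* = u^{*-1} \leq L$; applying \eqref{group4c} transforms this into $L^{*-1} \leq u$, and the upper bound $u \leq L$ follows from $u < e_* < L$. For the reverse implication, assume $L^{*-1} \leq u \leq L$. If $u \geq e_*$, then $|u|_* = u \leq L$ directly. If $u < e_*$, then $|u|_* = u^{*-1}$, and applying \eqref{group4c} to $L^{*-1} \leq u$ gives $u^{*-1} \leq L$, i.e., $|u|_* \leq L$.

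There is no real obstacle here: the statement is the exact analogue of the classical characterization $|x| \leq L \iff -L \leq x \leq L$, and the only non-trivial ingredient is the order-reversing behavior of the inverse operation $t \mapsto t^{*-1}$, which is already supplied by \eqref{group4c}. The proof is essentially a template computation once one has the triangle inequality, the absolute value definition \eqref{abs}, and the inversion inequality at hand.
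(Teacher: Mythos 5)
Your argument is correct and is exactly the standard absolute-value case analysis that the paper has in mind: it states Lemma~\ref{L4} without proof, noting only that the argument is ``identical to the standard absolute value,'' and your two-case split using \eqref{group4c} together with $e_*^{*-1}=e_*$ and $(u^{*-1})^{*-1}=u$ is precisely that argument. Nothing is missing.
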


\subsection{A vector space induced by the growth rate $h$}
Notice that the results devoted to the ordered group $(\mathbb{R},*)$ were deduced
by using the first composition law (\ref{LCI}) and its consequences. Moreover, it is well known that
(see \emph{e.g.}, \cite[p.11]{Bourbaki}) any abelian group is a $\mathbb{Z}$--module by considering
the external composition law
\begin{equation*}
\begin{array}{rcl}
\mathbb{Z} \times \mathbb{R} & \to &  \mathbb{R}  \\
(k,t) &\mapsto &  t^{*k}:=\left\{\begin{array}{rcl}
 \underbrace{t \ast \cdots \ast t}_{k-\textnormal{times}} &\textnormal{if}& k>0 \\
 e_{\ast}  &\textnormal{if}&  k=0 \\
 \underbrace{t^{\ast -1} \ast \cdots \ast t^{\ast -1}}_{k-\textnormal{times}} &\textnormal{if}& k<0.
 \end{array}\right.
\end{array} 
\end{equation*}

By using recursively the identities (\ref{emulation}) combined with the bijectivity of
$h$ we can see that
$$
t^{*k}=h^{-1}(h(t^{*k}))=h^{-1}(h(t)^{k}) \quad \textnormal{for any $k\in \mathbb{Z}$},
$$
and the above external composition law can be revisited as
\begin{equation}
\label{LCE-bis2}
\begin{array}{rcl}
\mathbb{Z} \times \mathbb{R} & \to &  \mathbb{R}  \\
(k,t) &\mapsto &  t^{*k}= k\odot t:=h^{-1}\left(h(t)^{k}\right),
\end{array} 
\end{equation}

A big novelty of this article is to notice that (\ref{LCE-bis2}) can be extended to (\ref{LCE}), that is,
the $\mathbb{Z}$--module $(\mathbb{R},*_{h})$ is an $\mathbb{R}$--vector space:
\begin{lemma}
\((\mathbb R,*_{_h}, \odot)\) is a vector space over the field \((\mathbb R,+,\cdot)\).
\end{lemma}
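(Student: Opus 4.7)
My plan is to leverage the existing abelian group structure on $(\mathbb{R},*_{_h})$ established in Proposition~\ref{Groupe}, so the only task is to verify the four axioms governing the external composition law $\odot$. The key observation driving every verification is the ``emulation identity'' for $\odot$, namely
\[
h(\alpha\odot t)=h(t)^{\alpha},
\qquad \alpha\in\mathbb{R},\ t\in\mathbb{R},
\]
which is immediate from the definition \eqref{LCE} and the bijectivity of $h$. Together with the identity $h(t*_{_h}s)=h(t)h(s)$ from \eqref{emulation}, this reduces every axiom to a trivial identity in the multiplicative group $(\mathbb{R}^{+},\cdot)$ involving real exponents.

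The proof then proceeds axiom by axiom. First, $1\odot t=h^{-1}(h(t)^{1})=t$ is immediate. Second, for compatibility of scalar multiplication, I apply $h$ to both sides of $(\alpha\beta)\odot t=\alpha\odot(\beta\odot t)$ and use the emulation identity twice to reduce the equality to $h(t)^{\alpha\beta}=(h(t)^{\beta})^{\alpha}$, which is the standard power rule in $\mathbb{R}^{+}$. Third, for distributivity over scalar addition, applying $h$ to $(\alpha+\beta)\odot t=(\alpha\odot t)*_{_h}(\beta\odot t)$ yields $h(t)^{\alpha+\beta}=h(t)^{\alpha}h(t)^{\beta}$, again standard. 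Finally, for distributivity over $*_{_h}$, applying $h$ to $\alpha\odot(t*_{_h}s)=(\alpha\odot t)*_{_h}(\alpha\odot s)$ yields $(h(t)h(s))^{\alpha}=h(t)^{\alpha}h(s)^{\alpha}$. Throughout I tacitly use that $h$ is injective, so identities after applying $h$ transfer back.

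Alternatively, and perhaps more conceptually, one could observe that the map $\varphi\colon (\mathbb{R},+,\cdot)\to (\mathbb{R},*_{_h},\odot)$ given by $\varphi(t)=h^{-1}(e^{t})$ is a bijection satisfying $\varphi(t+s)=\varphi(t)*_{_h}\varphi(s)$ and $\varphi(\alpha t)=\alpha\odot\varphi(t)$; transporting the standard vector space structure of $\mathbb{R}$ through $\varphi$ yields exactly $(\mathbb{R},*_{_h},\odot)$, which is essentially the content of the commutative diagrams mentioned after equation~\eqref{chv} in the introduction. This gives a one-line proof once the two intertwining identities are checked.

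There is no real obstacle here: the result is a routine verification. The only minor subtlety worth noting in the write-up is to explicitly invoke the injectivity of $h$ when passing from identities in $(\mathbb{R}^{+},\cdot)$ back to identities in $(\mathbb{R},*_{_h},\odot)$, and to justify that $h(t)^{\alpha}$ always lies in $\mathbb{R}^{+}$ (so that $h^{-1}$ may be applied), a point already remarked in the discussion of \eqref{LCE}.
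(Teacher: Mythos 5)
Your proposal is correct and follows essentially the same route as the paper: reduce each scalar-multiplication axiom, via the identities $h(\alpha\odot t)=h(t)^{\alpha}$ and $h(t*_{_h}s)=h(t)h(s)$, to a standard exponent identity in $(\mathbb{R}^{+},\cdot)$ and conclude by injectivity of $h$. Your explicit check of $1\odot t=t$ (which the paper omits) and the alternative transport-of-structure argument via $\varphi(t)=h^{-1}(e^{t})$ are welcome additions but do not change the substance of the argument.
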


\begin{proof}
By Proposition \ref{Groupe}, we know that $(\mathbb{R},\ast_{_h})$ is an abelian group. 
In consequence, we only need to verify the scalar multiplication axioms. In order to do
that, let $\alpha,\beta\in\mathbb R$ and $s,t\in (\mathbb R, *_{_h},\odot)$.


\medskip

\noindent i) \textit{Distributivity over scalar addition}. 
It is,   $
(\alpha+\beta)\odot t=(\alpha\odot t)*_{_h}(\beta\odot t).$

In fact, by using (\ref{LCE}) we can easily verify that
  \[
  h\big((\alpha+\beta)\odot t\big)=h(t)^{\alpha+\beta}=h(t)^{\alpha}h(t)^{\beta}
  =h\big(\alpha\odot t\big)\,h\big(\beta\odot t\big).
  \]

Now, by using (\ref{emulation}) we can see that
   \[
  h\big((\alpha+\beta)\odot t\big)=h\big((\alpha\odot t)*_{_h}(\beta\odot t)\big),
  \]
and the property follows since $h$ is invertible.
  \medskip
  
\noindent ii) \textit{Distributivity over vector operation}. We will verify that
$$
\alpha\odot(t*_{_h}s)=(\alpha\odot t)*_{_h}(\alpha\odot s).
$$

In fact, by using (\ref{emulation}) and (\ref{LCE}) we have 
  \[
  h\big(\alpha\odot(t*_{_h}s)\big)=h(t*_{_h}s)^{\alpha}
  =h(t)^{\alpha}h(s)^{\alpha}=h\big(\alpha\odot t\big)\,h\big(\alpha\odot s\big)
  =h\big((\alpha\odot t)*_{_h}(\alpha\odot s)\big),
  \]
and the property follows.

 \medskip 
\noindent iii) \textit{Compatibility with field multiplication}. The identity $
 (\alpha\beta)\odot t=\alpha\odot(\beta\odot t)
$
is a direct consequence of (\ref{LCE}) since
  \[
  h\big((\alpha\beta)\odot t\big)=h(t)^{\alpha\beta}=(h(t)^{\beta})^{\alpha}
  =h\big(\beta\odot t\big)^{\alpha}
  =h\big(\alpha\odot(\beta\odot t)\big).
  \]
 \end{proof}

\begin{proposition}\label{Prop_odot}
Let \(a, b \in (\mathbb R,*_{_h}, \odot)\)  and $\alpha \in (\mathbb R,+,\cdot).$  The following properties are verified
\begin{enumerate}
    \item The additive inverse of $a$ is $(-1)\odot a=a^{*-1}.$
    \medskip
    
  \item  $(\alpha \odot a)^{*-1} = \alpha \odot a^{*-1} = (-\alpha) \odot a.$

\medskip

\item If $a \le b$ and  $\alpha > 0$ then $\alpha \odot a \le \alpha \odot b.$ If $\alpha <  0$ then $\alpha \odot a \ge  \alpha \odot b.$

    \end{enumerate}

\end{proposition}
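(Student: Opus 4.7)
The plan is to verify each of the three claims by translating them through the homeomorphism $h$, which converts $*_{_h}$, $\odot$ and $(\cdot)^{*-1}$ into ordinary multiplication, exponentiation and reciprocal on $\mathbb{R}^+$. Since $h$ is bijective and strictly increasing, any identity or inequality in $(\mathbb{R},*_{_h},\odot)$ is equivalent to the corresponding identity or inequality of the $h$-images in $(\mathbb{R}^+,\cdot)$, and this observation will absorb essentially all of the work.

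For item (1), I would apply $h$ to $(-1)\odot a$ using the definition \eqref{LCE}: $h((-1)\odot a)=h(a)^{-1}=1/h(a)$, which by \eqref{emulation} equals $h(a^{*-1})$. Injectivity of $h$ then gives $(-1)\odot a=a^{*-1}$; since $a*_{_h}a^{*-1}=e_*$ by Proposition \ref{Groupe}, this element is indeed the additive inverse of $a$ in the vector space $(\mathbb{R},*_{_h},\odot)$.

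For item (2), I would show that the three expressions $(\alpha\odot a)^{*-1}$, $\alpha\odot a^{*-1}$ and $(-\alpha)\odot a$ share a common $h$-image, namely $h(a)^{-\alpha}$. Using \eqref{emulation} and \eqref{LCE}, the first becomes $1/h(\alpha\odot a)=1/h(a)^{\alpha}=h(a)^{-\alpha}$; the second becomes $h(a^{*-1})^{\alpha}=(1/h(a))^{\alpha}=h(a)^{-\alpha}$; and the third is $h(a)^{-\alpha}$ directly by definition. Injectivity of $h$ closes the chain of equalities.

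For item (3), I would combine the strict monotonicity of $h$ (and hence of $h^{-1}$) with the monotonicity of the real map $x\mapsto x^\alpha$ on $(0,\infty)$. The hypothesis $a\le b$ yields $0<h(a)\le h(b)$; raising to the power $\alpha$ preserves the inequality when $\alpha>0$ and reverses it when $\alpha<0$; applying the strictly increasing $h^{-1}$ transports the result back to the desired inequality between $\alpha\odot a$ and $\alpha\odot b$. I do not anticipate a serious obstacle: the whole proposition is structural bookkeeping supported by the single key identity $h(\alpha\odot t)=h(t)^{\alpha}$ together with the order-preservation of $h$.
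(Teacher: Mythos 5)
Your proof is correct and follows essentially the same route as the paper: all three items are verified by pushing everything through $h$ via the key identity $h(\alpha\odot t)=h(t)^{\alpha}$ together with the identities \eqref{emulation} and the monotonicity of $h$ and $h^{-1}$. The only cosmetic difference is in item (3), where the paper first normalizes to the positive cone (using $a\le b\iff e_*\le b*_{_h}a^{*-1}$) before invoking the monotonicity of $x\mapsto x^{\alpha}$, while you apply it directly to $h(a)\le h(b)$; both arguments are equally valid.
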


\begin{proof}
(1) Since $h((-1)\odot a)=h(a)^{-1}$ and hence $a*_{_h}((-1)\odot a)=e_*$.
\medskip

(2) Note that 
$$
(\alpha \odot a)^{*-1} = h^{-1}\left( \frac{1}{h(\alpha \odot a)}\right) = h^{-1}\left( \frac{1}{h(a)^\alpha }\right) = \alpha \odot a^{*-1} = (-\alpha) \odot a.
$$

\medskip
(3) Assume that $a \le b,$ it is, $e_* \le b *_{_h} a^{*-1} $. Then 
$$
\begin{array}{rl}
\medskip
(\alpha \odot b) *_{_h} (\alpha \odot a)^{*-1} & = (\alpha \odot b) *_{_h} (\alpha \odot a^{*-1}) = \alpha \odot (b *_{_h}  a^{*-1}) \\
\medskip
& = h^{-1}(h(b *_{_h}  a^{*-1})^{\alpha}).
\end{array}
$$
Since $e_* \le b *_{_h} a^{*-1}$ we have that $1 \le h(b *_{_h}  a^{*-1}).$ 

\medskip

If $\alpha > 0 $ then $1 \le h(b *_{_h}  a^{*-1})^\alpha$  and so $e_* \le h^{-1}(h(b *_{_h}  a^{*-1})^\alpha)$
concluding that $e_* \le (\alpha \odot b) *_{_h} (\alpha \odot a)^{*-1}$ equivalently $\alpha \odot a \le \alpha \odot b.$

\medskip

Now, if $\alpha < 0 $ then $1 \ge  h(b *_{_h}  a^{*-1})^\alpha$  and so $e_* \ge h^{-1}(h(b *_{_h}  a^{*-1})^\alpha)$
hence $ (\alpha \odot b) *_{_h} (\alpha \odot a)^{*-1} \le e_*$ next  $\alpha \odot b \le \alpha \odot a.$
\end{proof}

The Lemmas \ref{desigTrian},\ref{metrica} and \ref{L4} gathers results stated previously
in \cite{EPR} for the abelian group $(\mathbb{R},*_{h})$. We can strength these results
by considering the vector space $(\mathbb{R}, *_{_h}, \odot)$ over $(\mathbb{R}, +, \cdot)$ and 
the map $t\mapsto  |t|_* = d(t,e_*)$.

\begin{lemma}
$|\cdot|_*$ is a norm in  $(\mathbb R,*_{_h},\odot).$ 
\end{lemma}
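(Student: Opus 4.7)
The plan is to verify the three classical norm axioms, reinterpreted for the vector space $(\mathbb{R},*_{_h},\odot)$ over $(\mathbb{R},+,\cdot)$. Since the neutral element is $e_*$, the additive operation is $*_{_h}$ and the scalar action is $\odot$, the axioms read: (N1) $|t|_*\geq e_*$, with equality if and only if $t=e_*$; (N2) $|\alpha\odot t|_*=|\alpha|\odot |t|_*$ for all $\alpha\in\mathbb{R}$; and (N3) $|t*_{_h}s|_*\leq |t|_**_{_h}|s|_*$. Axiom (N3) is already available as Lemma~\ref{desigTrian}, so the real work is in (N1) and (N2).

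For (N1), I would read off the piecewise definition \eqref{abs} together with the order-reversal \eqref{group4c}: if $e_*\leq t$ the claim is immediate, while if $t<e_*$ then \eqref{group4c} applied to $t$ and $e_*$ (using $e_*^{*-1}=e_*$) gives $e_*<t^{*-1}=|t|_*$. The identity case collapses both branches to $t=e_*$.

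The bulk of the proof is (N2), which I would dispatch by a short case analysis on the sign of $\alpha$ and on whether $t\geq e_*$ or $t<e_*$. The $\alpha=0$ case is trivial since both sides reduce to $e_*$. Assume $\alpha>0$. By Proposition~\ref{Prop_odot}(3), $\alpha\odot(\cdot)$ preserves the order, so $t\geq e_*$ forces $\alpha\odot t\geq e_*$, whence
\[
|\alpha\odot t|_*=\alpha\odot t=\alpha\odot |t|_*=|\alpha|\odot |t|_*,
\]
and $t<e_*$ forces $\alpha\odot t<e_*$, so
\[
|\alpha\odot t|_*=(\alpha\odot t)^{*-1}=\alpha\odot t^{*-1}=\alpha\odot |t|_*=|\alpha|\odot |t|_*,
\]
where the second equality uses Proposition~\ref{Prop_odot}(2). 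For $\alpha<0$, Proposition~\ref{Prop_odot}(3) reverses the order, and the same two subcases yield the result upon replacing $\alpha$ by $-\alpha=|\alpha|$ via the identity $(\alpha\odot t)^{*-1}=(-\alpha)\odot t$.

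I do not anticipate a genuine obstacle: all the nontrivial algebra has been absorbed into Proposition~\ref{Prop_odot}. The only point that requires a moment of care is ensuring consistency between the two branches of the piecewise definition of $|\cdot|_*$ and the sign-dependent behavior of $\odot$; the identities $(\alpha\odot t)^{*-1}=\alpha\odot t^{*-1}=(-\alpha)\odot t$ make this bookkeeping transparent, so the verification reduces to the four sign combinations sketched above.
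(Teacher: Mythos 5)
Your proposal is correct and follows essentially the same route as the paper: positivity and the triangle inequality are delegated to Lemmas~\ref{metrica} and~\ref{desigTrian}, and the homogeneity identity $|\alpha\odot t|_*=|\alpha|\odot|t|_*$ is established by the same four-way case split on the sign of $\alpha$ and the position of $t$ relative to $e_*$. The only (harmless) difference is that you route the computations through Proposition~\ref{Prop_odot}(2)--(3), whereas the paper unwinds each case directly in terms of $h$ and $h^{-1}$.
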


\begin{proof}
Is clear that $|t|_* \ge e_*.$ From Lemma \ref{metrica} d1) we have that $|t|_*=e_* $ if and only if $t = e_*,$ and by Lemma \ref{desigTrian} that $|t*s|_* \le |t|_* * |s|_*.$ It only remains to prove that $|\alpha \odot t|_*= |\alpha|\, \odot \,  |t|_*,$ for all $t \in (\mathbb R,*,\odot)$ and $\alpha \in (\mathbb R,+,\cdot),$ here $|\cdot|$ is the usual absolute value  in $\mathbb{R}.$ By definition (\ref{abs}) we have that

\begin{equation*}
|\alpha \odot t|_{\ast}=\left\{\begin{array}{ccr}
\alpha \odot t &\textnormal{if}& e_{*}\leq \alpha \odot t, \\
(\alpha \odot t)^{\ast -1} &\textnormal{if}& \,\, \alpha \odot t <e_{\ast}.
\end{array}\right.
\end{equation*}
Since $h$ and $h^{-1}$ are increasing, we will consider four cases:

\medskip

\noindent \textit{Case i)}. If $t > e_*$ and $\alpha > 0$ then $h(t)^\alpha > 1$ consequently $\alpha \odot t= h^{-1}(h(t)^\alpha) > e_*,$ hence
    $$
    |\alpha \odot t|_* = \alpha \odot t = |\alpha| \odot |t|_*    $$

\noindent \textit{Case ii)}. If $t < e_*$ and $\alpha > 0$ we obtain $h(t)^\alpha < 1$ and $\alpha \odot t= h^{-1}(h(t)^\alpha) < e_*,$ hence
    $$
    |\alpha \odot t|_* = (\alpha \odot t)^{*-1} = h^{-1}\left(\frac{1}{h(\alpha \odot t)}\right) = 
h^{-1}\left(\frac{1}{h(t)^\alpha}\right)
   = |\alpha| \odot |t|_*    $$

\noindent \textit{Case iii)}. For $t > e_*$ and $\alpha < 0$ implies $h(t)^\alpha < 1$ and $\alpha \odot t= h^{-1}(h(t)^\alpha) < e_*,$ then
    $$
    |\alpha \odot t|_* = (\alpha \odot t)^{*-1} = h^{-1}(h(t)^{-\alpha}) = (-\alpha) \odot t   = |\alpha| \odot |t|_*    $$

\medskip

\noindent \textit{Case iv)}. If $t < e_*$ and $\alpha < 0$ then  $h(t)^\alpha > 1$ and $\alpha \odot t= h^{-1}(h(t)^\alpha) > e_*,$ then
    $$
    |\alpha \odot t|_*  = h^{-1}(h(t)^{\alpha}) = h^{-1}\left(\frac{1}{h(t)^{-\alpha}}\right) =  h^{-1}\left(\frac{1}{h(t)^{|\alpha|}}\right) = |\alpha| \odot t^{*-1}   = |\alpha| \odot |t|_*    .$$

\medskip
Moreover is clear that $|\alpha \odot e_*|_* = |e_*|_* = |\alpha| \odot e_* = e_* ,$ and conclude that 
$$
|\alpha \odot t|_* = |\alpha| \odot |t|_*
$$
for all $t \in (\mathbb R,*_{_h},\odot)$ and $ \ \alpha \in (\mathbb R,+,\cdot),$ obtaining the result desired.

\end{proof}

\subsection{The normed vector space $(\mathbb{R},*_{_h}, \odot, |\cdot|_*)$ is complete}

We will prove that the vector space $(\mathbb{R},*_{_h}, \odot)$ with the norm $|\cdot|_*$ is complete.
This requires to introduce definitions of convergence tailored to the norm $|\cdot|_{*}$ and the distance
(\ref{distance}). In the follows, we denote $\mathbb{R}_* = (\mathbb{R},*_{_h}, \odot, |\cdot|_*).$
\begin{definition}
The sequence $\{x_{n}\}_{n}\subset \mathbb{R}_*$ converges to $x$ when $n\to +\infty$ if
\begin{displaymath}
\forall \, \varepsilon >e_{\ast} \,\, \exists N:=N(\varepsilon)\in \mathbb{N} \quad\textnormal{such that} \quad
n>N \Rightarrow |x_{n}\ast x^{\ast-1}|_{\ast}<\varepsilon.
\end{displaymath}
\end{definition}

\begin{definition}
The sequence $\{x_{n}\}_{n}\subset \mathbb{R}_*$ is a Cauchy sequence if
\begin{displaymath}
\forall \, \varepsilon >e_{\ast} \,\, \exists N:=N(\varepsilon)\in \mathbb{N} \quad\textnormal{such that} \quad
n,m>N \Rightarrow |x_{n}\ast x_{m}^{\ast-1}|_{\ast}<\varepsilon.
\end{displaymath}
\end{definition}

We point out that if $h(t)=e^{t}$ we recover the classical definitions on convergent and Cauchy sequences.

\begin{definition}
We say that the sequence $\{x_{n}\}_{n}\subset \mathbb{R}_*$ is bounded if there exists $M>e_{\ast}$ such that
\begin{displaymath}
|x_{n}|_{\ast}\leq M  \quad \textnormal{for any $n\in \mathbb{N}$},
\end{displaymath}
or equivalently , from Lemma \ref{L4}, $M^{\ast-1} \leq x_{n} \leq M$ for any $n\in \mathbb{N}$.    
\end{definition}

\begin{lemma}
\label{CIB}
Any Cauchy sequence $\{x_{n}\}_{n}\subset \mathbb{R}_*$ is bounded.
\end{lemma}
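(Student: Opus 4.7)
The plan is to transfer the classical real-analysis argument (Cauchy sequences are bounded) to the $*_{_h}$-normed setting, using the triangle inequality from Lemma \ref{desigTrian} and the fact that the totally ordered group $(\mathbb{R},*_{_h},\leq)$ admits a maximum on any finite set. First, I would fix some $\varepsilon > e_*$ and invoke the Cauchy hypothesis to obtain an integer $N = N(\varepsilon)$ such that $|x_n *_{_h} x_m^{*-1}|_* < \varepsilon$ whenever $n,m \geq N$. Specializing to $m = N$ yields the uniform tail estimate $|x_n *_{_h} x_N^{*-1}|_* < \varepsilon$ for all $n \geq N$.

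Second, for each $n \geq N$ I would decompose $x_n = (x_n *_{_h} x_N^{*-1}) *_{_h} x_N$ and apply Lemma \ref{desigTrian} to obtain
$$
|x_n|_* \;\leq\; |x_n *_{_h} x_N^{*-1}|_* \,*_{_h}\, |x_N|_* \;\leq\; \varepsilon *_{_h} |x_N|_*,
$$
which controls the tail uniformly. For the finitely many initial indices $1,\dots,N-1$, I would set
$$
M := \max\bigl\{\,|x_1|_*,\ \dots,\ |x_{N-1}|_*,\ \varepsilon *_{_h} |x_N|_*\,\bigr\},
$$
where the maximum is taken with respect to the total order $\leq$ on $(\mathbb{R},*_{_h})$. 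By construction, $|x_n|_* \leq M$ for every $n \in \mathbb{N}$.

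It remains to verify that $M$ lies strictly above $e_*$, as required by the definition of boundedness. Since $|x_N|_* \geq e_*$ and $\varepsilon > e_*$, strict monotonicity of the group operation (equations \eqref{group4}--\eqref{group4b}) yields
$$
\varepsilon *_{_h} |x_N|_* \;>\; e_* *_{_h} |x_N|_* \;=\; |x_N|_* \;\geq\; e_*,
$$
so $M \geq \varepsilon *_{_h} |x_N|_* > e_*$, concluding the argument. I do not expect any real obstacle here: the proof is a faithful transcription of the classical one, and the only mildly delicate point is the strict inequality $M > e_*$, which is handled by strict monotonicity of $*_{_h}$.
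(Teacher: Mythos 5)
Your proof is correct and follows essentially the same route as the paper's: fix a threshold from the Cauchy condition, decompose $x_n = (x_n *_{_h} x_N^{*-1}) *_{_h} x_N$, apply the triangle inequality of Lemma \ref{desigTrian} to bound the tail by $\varepsilon *_{_h} |x_N|_*$, and take the maximum over the finitely many remaining terms. Your explicit check that $M > e_*$ (required by the paper's definition of boundedness) is a small point the paper leaves implicit, but it does not change the argument.
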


\begin{proof}

Let $\{x_{n}\}_{n}$ be a Cauchy sequence, for  $L>e_{\ast}$ there exists $N:=N(L)$ such that
$ n,m>N $ implies $|x_{n}\ast_{_h}x_{m}^{\ast-1}|_{\ast}<L.$

\medskip

\noindent Now let $n>N$ and by using triangle inequality note that
\begin{displaymath}
\begin{array}{rcl}
\medskip
|x_{n}|_{\ast} & = & |x_{n}\ast_{_h} e_{\ast}|_* 
               = |x_{n}\ast_{_h} x_{N+1}^{\ast-1}\ast_{_h} x_{N+1}|_{\ast} \\
               \medskip
               & \leq & |x_{n}\ast_{_h} x_{N+1}^{\ast-1}|_{\ast} \ast_{_h} |x_{N+1}|_{\ast}  \\
               \medskip
               & \leq & L\ast_{_h} |x_{N+1}|_{\ast}.
\end{array}    
\end{displaymath}

Hence  
$
|x_{n}|_{\ast}  \leq  L \ast_{_h} |x_{N+1}|_{\ast}  \ \ \textnormal{for any $n>N$},
$
and we conclude that
$$
|x_{k}|_{\ast}\leq \max\left\{|x_{1}|_{\ast},\ldots,|x_{N}|_{\ast},|x_{N+1}|_{\ast},L \ast_{_h} |x_{N+1}|_{\ast}\right\} \quad\textnormal{for any $k\in \mathbb{N}$},
$$
and the boundedness follows.

\end{proof}

\begin{lemma}
\label{BW}
If the sequence $\{x_{n}\}_{n}\subset \mathbb{R}_*$ is bounded, there exists a convergent
subsequence $\{x_{n_{k}}\}_{k}$.
\end{lemma}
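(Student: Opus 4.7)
The idea is to transfer the statement to the classical Bolzano--Weierstrass theorem in $(\mathbb{R},+,\cdot,|\cdot|)$ via the homeomorphism $h$. The key observation is that boundedness with respect to $|\cdot|_*$ and classical boundedness are linked by applying $h$, and the same holds for convergence.

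First, I would fix $M>e_*$ with $|x_n|_*\le M$ for all $n$. By Lemma \ref{L4} this is equivalent to $M^{*-1}\le x_n\le M$ in the classical order. Applying the strictly increasing map $h$ and recalling \eqref{emulation}, this yields
\[
\tfrac{1}{h(M)}\;=\;h(M^{*-1})\;\le\;h(x_n)\;\le\;h(M),
\]
so that $\{h(x_n)\}_n$ is a bounded sequence of positive real numbers in the classical sense. By the classical Bolzano--Weierstrass theorem, there exists a subsequence $\{h(x_{n_k})\}_k$ and a number $L\in[1/h(M),h(M)]\subset\mathbb{R}^+$ such that $h(x_{n_k})\to L$ in the usual sense.

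Next I would set $x:=h^{-1}(L)\in\mathbb{R}_*$ and prove that $x_{n_k}\to x$ in the $|\cdot|_*$ sense. Using \eqref{emulation} we have
\[
h\bigl(x_{n_k}*_{_h} x^{*-1}\bigr)=\frac{h(x_{n_k})}{h(x)}\longrightarrow \frac{L}{L}=1.
\]
From the definition \eqref{abs}, for any $y\in\mathbb{R}_*$ one has $h(|y|_*)=\max\{h(y),1/h(y)\}$, hence
\[
h\bigl(|x_{n_k}*_{_h} x^{*-1}|_*\bigr)=\max\!\left\{\frac{h(x_{n_k})}{h(x)},\frac{h(x)}{h(x_{n_k})}\right\}\longrightarrow 1.
\]
Given any $\varepsilon>e_*$ we have $h(\varepsilon)>1$, and since $h^{-1}$ is continuous (or directly since $h$ is strictly increasing), the latter convergence implies $|x_{n_k}*_{_h} x^{*-1}|_*<\varepsilon$ for $k$ large enough. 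This is precisely the convergence $x_{n_k}\to x$ in $\mathbb{R}_*$.

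The argument is essentially a transport of structure, and I do not expect a serious obstacle: the only point that needs care is the equivalence between convergence in the classical topology on $\mathbb{R}$ and convergence with respect to $|\cdot|_*$, which follows from the fact that $h$ is a homeomorphism of $\mathbb{R}$ onto $\mathbb{R}^+$ together with the identity $h(|y|_*)=\max\{h(y),h(y)^{-1}\}$. Once this is observed, completeness of $\mathbb{R}_*$ (the main use of Lemmas \ref{CIB} and \ref{BW}) follows by the standard argument: a Cauchy sequence is bounded, so by the present lemma admits a convergent subsequence, and a Cauchy sequence with a convergent subsequence converges.
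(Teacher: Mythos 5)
Your proof is correct, but it takes a genuinely different route from the paper. The paper proves the lemma intrinsically, by redoing the classical bisection (nested-intervals) argument inside $(\mathbb{R}_*,*_{_h})$: starting from $[M^{*-1},M]$, it repeatedly halves intervals via $\tfrac{1}{2^n}\odot M$, shows this $*$-length tends to $e_*$, and extracts a subsequence trapped in intervals of shrinking $*$-length. You instead transport the whole statement through $h$ to $(\mathbb{R}^{+},\cdot)$, invoke the classical Bolzano--Weierstrass theorem for $\{h(x_n)\}_n\subset[1/h(M),h(M)]$, and pull the limit back via $x=h^{-1}(L)$; the identity $h(|y|_*)=\max\{h(y),1/h(y)\}$ then converts classical convergence of $h(x_{n_k})$ to $L>0$ into $|\cdot|_*$-convergence of $x_{n_k}$ to $x$. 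Each step of yours checks out (in particular $L\ge 1/h(M)>0$, so $h^{-1}(L)$ is well defined). What your approach buys is brevity, an explicit limit point, and a cleaner logical structure -- indeed the paper's version never quite pins down the limit of the subsequence, referring only to a base point $c$ of an interval of length $\tfrac{1}{2^N}\odot M$ that itself depends on $N$, so your argument is arguably the tighter one. What the paper's approach buys is self-containedness within the new structure: it demonstrates that the classical proof template survives verbatim under $*_{_h}$, without appealing to the conjugacy with $(\mathbb{R}^+,\cdot)$, which is in the spirit of the rest of Section~2. Either proof is acceptable; if you use yours, it would be worth stating the identity $h(|y|_*)=\max\{h(y),1/h(y)\}$ as a displayed consequence of \eqref{abs} and \eqref{emulation}, since it is the one nontrivial bridge between the two topologies and is used nowhere else in the paper in that explicit form.
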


\begin{proof}
By hypothesis there exists $M>e_{\ast}$ such that 
$M^{\ast -1} \leq x_{n} \leq M$ for any $n\in \mathbb{N}$.  Then at least one of the subintervals of \textit{large} $M$,
either $I_{11}=[M^{\ast-1},e_{\ast})$ or $I_{12}=(e_{\ast},M]$ must contain infinite terms of the sequence.

\medskip

Let $M_{1}=\frac{1}{2}\odot M=h^{-1}(\sqrt{h(M)})>e_{\ast}$ and notice that $M_{1}^{\ast 2}=2\odot M_{1}=M.$ Now, consider the intervals of \textit{large} $M_{1}:$
\begin{itemize}
\item $I_{21}=[M^{\ast-1},M^{\ast-1}\ast_{_h} M_{1})$, 
\item $I_{22}=[M^{\ast-1}\ast_{_h} M_{1},M^{\ast-1}\ast_{_h} \underbrace{M_{1}\ast_{_h}M_{1}}_{=M_{1}^{*2}=M})=[M^{\ast-1}\ast_{_h} M_{1},e_{\ast})$,
\item $I_{23}=(e_{\ast},M_{1}^{\ast}]$,
\item $I_{24}=[M_{1},M_{1}\ast_{_h} M_{1}]=[M_{1}^{\ast},M]$,
\end{itemize}
and  at least one of these intervals must contain infinite terms of the sequence $\{x_{n}\}_{n}$. 

We can follow in a recursive way and to deduce that for any $n\in \mathbb{N}$, there will be an interval
of large $\frac{1}{2^{n}}\odot M$ containing infinite terms of the sequence $\{x_{n}\}_{n}$.

\medskip
Notice that  $\frac{1}{2^{n}}\odot M>e_{\ast}$ and the continuity of $h^{-1}(\cdot)$ implies that
\begin{equation}
\nonumber 
\lim\limits_{n\to +\infty}\frac{1}{2^{n}}\odot M = 
\lim\limits_{n\to +\infty}
h^{-1}\left(h(M)^{\frac{1}{2^{n}}}\right)=h^{-1}(1)=e_{\ast},
\end{equation}
which implies that
\begin{equation}
\label{ae1}
\nonumber\forall \delta > e_* \quad \exists N(\delta)\in \mathbb{N} \quad \textnormal{such that $k>N \Rightarrow \frac{1}{2^{k}}\odot M <\delta$}.
\end{equation}

Finally, given the sequence $\{x_{n}\}_{n}$ and the above $N(\delta)$, we construct a subsequence $\{x_{n_{k}}\}_{k}$ where the terms
are inside an interval $[c,c\ast M_{N}]$ of large $M_{N}=\frac{1}{2^{N}}\odot M$, after some finite time, that is 
\begin{equation*}
|x_{n_{k}}\ast_{_h} c^{\ast-1}|_{\ast}<\frac{1}{2^{N}}\odot M \quad \textnormal{for any $k > N$}.
\end{equation*}

Then, by considering the above constructed subsequence $\{x_{n_{k}}\}_{k}$ and using 
(\ref{ae1}), 
we deduce that
\begin{displaymath}
\begin{array}{rcl}
\displaystyle |x_{n_{k}}\ast_{_h} c^{\ast-1}|_{\ast}<  \frac{1}{2^{{N}}}\odot M  < \delta\\\\
\end{array}
\end{displaymath}
and, $\{x_{n_{k}}\}_{k}$ is convergent.

\end{proof}

\begin{lemma}
The vector space $\mathbb{R}_*$ is a Banach space.
\end{lemma}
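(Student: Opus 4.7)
The plan is to combine the two previous lemmas with the classical argument that a Cauchy sequence having a convergent subsequence must itself converge, adapted to the group operation $*_{_h}$ and the norm $|\cdot|_*$. Let $\{x_n\}_n\subset\mathbb{R}_*$ be Cauchy. By Lemma \ref{CIB}, $\{x_n\}_n$ is bounded, so by Lemma \ref{BW} it admits a convergent subsequence $\{x_{n_k}\}_k$ with some limit $x\in\mathbb{R}_*$. I will show that the entire sequence converges to $x$.

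Given $\varepsilon>e_*$, the natural choice is to split it as $\varepsilon=\varepsilon_0 *_{_h} \varepsilon_0$ with $\varepsilon_0:=\tfrac{1}{2}\odot\varepsilon$. One checks that $\varepsilon_0>e_*$, since $h(\varepsilon)>1$ yields $h(\varepsilon)^{1/2}>1$ and hence $\varepsilon_0=h^{-1}(h(\varepsilon)^{1/2})>h^{-1}(1)=e_*$; moreover the identity $\varepsilon_0 *_{_h}\varepsilon_0=h^{-1}(h(\varepsilon)^{1/2}h(\varepsilon)^{1/2})=\varepsilon$ holds by \eqref{emulation}. By the Cauchy property pick $N_1\in\mathbb{N}$ with $|x_n *_{_h} x_m^{*-1}|_*<\varepsilon_0$ for every $n,m>N_1$, and by convergence of the subsequence pick an index $k$ with $n_k>N_1$ and $|x_{n_k} *_{_h} x^{*-1}|_*<\varepsilon_0$.

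For any $n>N_1$, inserting $x_{n_k} *_{_h} x_{n_k}^{*-1}=e_*$ and applying the triangle inequality of Lemma \ref{desigTrian} yields
\begin{equation*}
|x_n *_{_h} x^{*-1}|_* = |x_n *_{_h} x_{n_k}^{*-1} *_{_h} x_{n_k} *_{_h} x^{*-1}|_* \leq |x_n *_{_h} x_{n_k}^{*-1}|_* *_{_h} |x_{n_k} *_{_h} x^{*-1}|_* < \varepsilon_0 *_{_h} \varepsilon_0 = \varepsilon,
\end{equation*}
using that $*_{_h}$ is monotone in each variable by \eqref{group4}--\eqref{group4b}. Thus $x_n\to x$ in $\mathbb{R}_*$, which establishes completeness.

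The argument is structurally identical to the standard $(\mathbb{R},+)$ proof, so the only real obstacle is an algebraic one: having to decompose $\varepsilon$ as a $*_{_h}$-product of two strictly larger-than-$e_*$ terms, which is handled cleanly through the external law $\odot$ thanks to the vector space structure, and having to respect the multiplicative nature of the triangle inequality (products instead of sums) when combining the Cauchy and subsequential-convergence estimates.
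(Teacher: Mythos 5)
Your proof is correct and follows essentially the same route as the paper's: Cauchy implies bounded (Lemma \ref{CIB}), bounded implies a convergent subsequence (Lemma \ref{BW}), and the triangle inequality of Lemma \ref{desigTrian} transfers the limit to the whole sequence. The only difference is cosmetic: you pre-split $\varepsilon$ as $(\tfrac12\odot\varepsilon)*_{_h}(\tfrac12\odot\varepsilon)$ so the final estimate is literally $<\varepsilon$, whereas the paper stops at the bound $\varepsilon*_{_h}\varepsilon$; your version is the slightly cleaner write-up of the same argument.
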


\begin{proof}
We will prove that if $\{x_{n}\}_{n}\subset \mathbb{R}_*$ is a Cauchy sequence, then it is convergent.
To do that notice that by Lemma \ref{CIB} we know that $\{x_{n}\}_{n}$ is a bounded sequence. Then by Lemma   
\ref{BW},  there exists a convergent
subsequence $\{x_{n_{k}}\}_{k}$ such that
\begin{equation*}
\forall \, \varepsilon >e_{\ast} \,\, \exists N_{1}:=N_{1}(\varepsilon)\in \mathbb{N} \quad\textnormal{such that} \quad
k>N_{1} \Rightarrow |x_{n_{k}}\ast_{_h} x^{\ast-1}|_{\ast}<\varepsilon.
\end{equation*}

In addition, as $\{x_{n}\}_{n}$ is a Cauchy sequence, we also know that
\begin{equation*}
\forall \, \varepsilon >e_{\ast} \,\, \exists N_{2}:=N_{2}(\varepsilon)\in \mathbb{N} \quad\textnormal{such that} \quad
n,n_{k}>N_{2} \Rightarrow |x_{n}\ast_{_h} x_{n_{k}}^{\ast-1}|_{\ast}<\varepsilon.
\end{equation*}

Now, by using triangle's inequality (\ref{DT}) we can deduce that
\begin{displaymath}
\begin{array}{l}
|x_{n}\ast_{_h} x^{\ast-1}|_{\ast} = |x_{n}\ast_{_h} x_{n_{k}}^{\ast-1}\ast_{_h}x_{n_{k}}\ast_{_h} x^{\ast-1}|_{\ast} 
\leq  |x_{n}\ast_{_h}x_{n_{k}}^{\ast-1}|_{\ast}\ast_{_h}|x_{n_{k}}\ast_{_h}x^{\ast-1}|_{\ast}, 
\end{array}    
\end{displaymath}
which allow us to conclude that
\begin{displaymath}
\forall \, \varepsilon \ast_{_h} \varepsilon >e_{\ast} \,\, \exists N>\max\{N_{1},N_{2}\}\in \mathbb{N} \ \textnormal{such that} \ 
n>N \Rightarrow |x_{n}\ast_{_h} x^{\ast-1}|_{\ast}<\varepsilon \ast_{_h} \varepsilon
\end{displaymath}
and the convergence follows.

\end{proof}

\section{A Generalized Evolution Semigroup}

In this section, we will consider the normed vector space $ \mathbb{R}_* := (\mathbb{R}, *_{_h}, \odot, |\cdot|_*)$ and
the Banach space $X = (X, ||\cdot||_X)$ such that $\mathcal{B}(X)$ denotes the set of bounded linear operators in $X.$ Furthermore, $C(\mathbb{R}_*, X)$ is the space of all continuous functions $X-$ valued whereas $C_0(\mathbb{R}_*, X)$ denotes the Banach space of all functions in 
$C(\mathbb{R}_*, X)$ vanishing at $\pm \infty,$ endowed with the norm
$$
||u||_{*\infty} := \sup_{s\in \mathbb{R}_*} ||u(s)||_X.
$$

\begin{Remark}
  The space $ C_c(\mathbb{R}_*, X) := \{u \in C(\mathbb{R}_*, X) \, : \, \mathrm{supp}(u) \ \mbox{is compact} \}$ is dense in $C_0(\mathbb{R}_*, X).$ In fact, when considering $(\mathbb R_*,*_{_h},\odot)$ as a \emph{one–dimensional normed real vector space} with norm $|\cdot |_{*}$ together with a compact set
$B_r:=\{t\in\mathbb R_*:\ |t|_*\le r\}$ we can choose continuous cut-offs $\psi_n:[0,\infty)\to[0,1]$ with $\psi_n\equiv1$ in $[0,n]$ and $\psi_n\equiv0$ on $[n+1,\infty)$, and set $\eta_n(t):=\psi_n(|t|_*)\in C_c(\mathbb R_*,[0,1])$ (indeed $\mathrm{supp}\,\eta_n\subset B_{n+1}$). 
For $f\in C_0(\mathbb R_*,X)$ define $f_n:=\eta_n f\in C_c(\mathbb R_*,X)$. If $t\in B_n$ then $f_n(t)=f(t)$; if $t\notin B_n$ then $\|f(t)\|_X$ is small by the definition of $C_0(\mathbb R_*,X)$, hence $\|f-f_n\|_{*\infty}\to0$.
Thus, $C_c(\mathbb R_*,X)$ is dense in $C_0(\mathbb R_*,X)$.
\end{Remark}

We will modify the classical concept of an evolution semigroup associated with an evolution family on the half-line to fit the case $t \geq e_*$ and the linear flows may not agree with the restricted hypothesis of uniform exponential bounded growth (\ref{decroissance}). For this purpose we establish the following definitions.
\medskip

\begin{definition}
\label{demigroupe}
Let $(Y,||\cdot||_{Y})$ be a Banach space.  A family $\{T_t\}_{t \ge e_*}$ of bounded linear operators from $Y$ into $Y$ is an $h-$semigroup if 
 \begin{itemize}
     \item[(i)] $T_{e_*} =  \mathrm{Id}$
\item[(ii)] $T_{t*_{_h}s} = T(t)T(s)$ for every $t, s \ge e_*.$
\end{itemize}
 \end{definition}

In addition, we will say that  $\{T_t\}_{t \ge e_*}$  is a \textit{strongly continuous $h-$semigroup} if
$$
\lim_{t\to e_*^+} T_t y = y \quad \mbox{for every} \ \ y \in Y,
$$
or equivalently
\begin{displaymath}
\forall \, \varepsilon > 0 \,\, \exists \, \delta > e_* \ \textnormal{such that for any} \ y \in Y, \ 
e_* < t < \delta \Rightarrow \left\|T(t) y - y \right\|_{Y}<\varepsilon.
\end{displaymath}

\begin{definition}
    
 An $h-$evolution family on $X$ is a collection $\{U(t,s)\}_{t\ge s}$ of bounded linear operators acting on $X$  such that the following properties hold:

\begin{itemize}
    \item $U(t,t) = \mathrm{Id}, \ t \in \mathbb{R}_*;$
    \item $U(t,\tau) U(\tau, s)= U(t,s), \ t \ge \tau \ge s;$
    \item for each $x \in X$, the mapping $(t,s) \to U(t,s)x$ is continuous on 
    $$\{(t,s) \in \mathbb{R}_* \times \mathbb{R}_{*} \, : \, t \ge s\}.$$
    \end{itemize}
\end{definition}

We will say that the above $h-$evolution family $\{U(t,s)\}_{t\ge s}$ is $h-$bounded if there exist constants $\alpha > 0$ and $K \ge 1$ such that 
\begin{equation}{\label{hcotaU}}
    ||U(t,s)|| \le K [h(t*_{_h}s^{*-1})]^\alpha, \ \ \mbox{for} \ \ t \ge s.
\end{equation}

\begin{remark}
Notice that, by using the identities \eqref{emulation}, we can easily deduce that \eqref{hcotaU}
is equivalent to 
\begin{displaymath}
||U(t,s)|| \le K \left(\frac{h(t)}{h(s)}\right)^\alpha, \ \ \mbox{for} \ \ t \ge s.
\end{displaymath}
\end{remark}

For any
 $u \in C_0(\mathbb{R}_*, X)$ and $t \ge e_*$ we will define 
\begin{equation}\label{SemigPaper}
T_tu(s) := U(s, s*_{_h}t^{*-1}) u(s*_{_h}t^{*-1}) \quad \textnormal{for any $t\in [e_{*},+\infty)$}.
\end{equation}

The next result states that $s\mapsto T_{t}u(s) \in C_{0}(\mathbb{R}_{*},X)$ for any
$u\in  C_{0}(\mathbb{R}_{*},X)$ and $t\geq e_{*}$.

\begin{lemma}\label{C0semigroup}
  Let $\{U(t,s)\}_{t\ge s }$ be an $h-$bounded evolution operator with constants $K$ and $\alpha$. Then $\{T_t\}_{t \ge e_*}$ given by \eqref{SemigPaper} 
  is a strongly continuous $h-$semigroup over  $C_0(\mathbb{R}_*, X).$
  \end{lemma}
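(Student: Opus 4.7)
The plan is to verify, in order, that (a) each $T_t$ maps $C_0(\mathbb R_*,X)$ into itself with $\|T_t\|\le K h(t)^\alpha$, (b) $T_{e_*}=\mathrm{Id}$, (c) the composition law $T_{t*_{_h}s}=T_tT_s$ holds, and (d) $\|T_tu-u\|_{*\infty}\to 0$ as $t\to e_*^+$ for every $u\in C_0(\mathbb R_*,X)$. Items (b) and (c) are algebraic identities: $T_{e_*}u(s)=U(s,s*_{_h}e_*^{*-1})u(s)=U(s,s)u(s)=u(s)$, while for $t,s\ge e_*$ and $\tau\in\mathbb R_*$ one uses the evolution property together with $\tau*_{_h}t^{*-1}*_{_h}s^{*-1}=\tau*_{_h}(t*_{_h}s)^{*-1}$ to obtain $T_tT_su(\tau)=U(\tau,\tau*_{_h}t^{*-1})U(\tau*_{_h}t^{*-1},\tau*_{_h}(t*_{_h}s)^{*-1})u(\tau*_{_h}(t*_{_h}s)^{*-1})=T_{t*_{_h}s}u(\tau)$.

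For (a), the norm estimate uses \eqref{hcotaU} together with $s*_{_h}(s*_{_h}t^{*-1})^{*-1}=t$, giving $\|U(s,s*_{_h}t^{*-1})\|\le Kh(t)^\alpha$, hence $\|T_tu\|_{*\infty}\le Kh(t)^\alpha\|u\|_{*\infty}$. Continuity of $s\mapsto T_tu(s)$ follows from the joint continuity of $(t,s)\mapsto U(t,s)x$ in the definition of an $h$-evolution family combined with continuity of $u$ and of the translation $s\mapsto s*_{_h}t^{*-1}$. For vanishing at infinity I write $s=(s*_{_h}t^{*-1})*_{_h}t$ and invoke the triangle inequality \eqref{DT} to obtain $|s*_{_h}t^{*-1}|_*\ge |s|_**_{_h}|t|_*^{*-1}$, so $|s|_*\to\infty$ forces $|s*_{_h}t^{*-1}|_*\to\infty$; the uniform bound on $U$ then yields $\|T_tu(s)\|_X\to 0$.

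The main obstacle is the strong continuity (d), and I would handle it by the standard $3\varepsilon$-density argument. Since $\|T_t\|\le Kh(t)^\alpha$ is uniformly bounded for $t$ in any compact neighborhood of $e_*$, and $C_c(\mathbb R_*,X)$ is dense in $C_0(\mathbb R_*,X)$ by the Remark preceding Definition \ref{demigroupe}, it suffices to establish strong continuity on $C_c(\mathbb R_*,X)$. Given $u\in C_c(\mathbb R_*,X)$ with $\mathrm{supp}(u)\subset B_R$, and a small $\delta_0>e_*$, the union $\widetilde K:=\bigcup_{e_*\le t\le\delta_0}\mathrm{supp}(T_tu)$ is contained in the compact set $B_R*_{_h}\delta_0$. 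Outside $\widetilde K$ one has $T_tu(s)-u(s)=0$, while on the compact set $[e_*,\delta_0]\times\widetilde K$ the function
\[
\Phi(t,s):=U\bigl(s,s*_{_h}t^{*-1}\bigr)u\bigl(s*_{_h}t^{*-1}\bigr)
\]
is continuous (by continuity of $U$, of $u$, and of the group operations) and reduces to $u(s)$ at $t=e_*$. Uniform continuity of $\Phi$ on this compact rectangle then gives a uniform $\delta_\varepsilon\in(e_*,\delta_0]$ such that $\|T_tu-u\|_{*\infty}<\varepsilon$ for $e_*<t<\delta_\varepsilon$.

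Putting these together and applying the $3\varepsilon$ trick—bounding $\|T_tu-u\|_{*\infty}$ by $\|T_t(u-u_n)\|_{*\infty}+\|T_tu_n-u_n\|_{*\infty}+\|u_n-u\|_{*\infty}$ with $u_n\in C_c(\mathbb R_*,X)$ approximating $u$—yields strong continuity on all of $C_0(\mathbb R_*,X)$. I expect the only delicate point to be checking that the candidate compact set $\widetilde K$ can genuinely be chosen uniformly for $t$ near $e_*$, which is where the monotonicity and continuity properties of the operations $*_{_h}$ and $\odot$ (notably \eqref{group4}--\eqref{group4c} and the continuity of $h,h^{-1}$) enter in an essential way.
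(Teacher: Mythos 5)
Your proposal is correct and follows the same global strategy as the paper: the norm bound via $s*_{_h}(s*_{_h}t^{*-1})^{*-1}=t$, the algebraic verification of (i)--(ii), and reduction of strong continuity to $C_c(\mathbb R_*,X)$ by density. The one place where your execution genuinely differs is the strong-continuity step: you invoke uniform continuity of $\Phi(t,s)=U(s,s*_{_h}t^{*-1})u(s*_{_h}t^{*-1})$ on a compact rectangle $[e_*,\delta_0]\times\widetilde K$, whereas the paper argues sequentially ($t_n\to e_*$, with a case split on whether $\{s_n\}$ is bounded) and uses a four-term telescoping estimate. Be aware that your claim that $\Phi$ is jointly continuous is not an immediate consequence of the definition of an $h$-evolution family, which only guarantees continuity of $(t,s)\mapsto U(t,s)x$ for each \emph{fixed} $x$; since the vector $u(s*_{_h}t^{*-1})$ varies with $(t,s)$, you must interpolate, e.g.\ $\Phi(t,s)-\Phi(t_0,s_0)=U(s,\sigma)\bigl[u(\sigma)-u(\sigma_0)\bigr]+\bigl[U(s,\sigma)-U(s_0,\sigma_0)\bigr]u(\sigma_0)$ with $\sigma=s*_{_h}t^{*-1}$, and control the first term by the locally uniform bound $\|U(s,\sigma)\|\le K\,h(t)^\alpha$ coming from $h$-boundedness. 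This is precisely what the paper's four-term decomposition (its estimates \eqref{cont_u}, \eqref{Cont_U}, \eqref{cota_u_n}) accomplishes, so your argument closes once you make that interpolation explicit. A minor slip: $\bigcup_{e_*\le t\le\delta_0}\mathrm{supp}(T_tu)$ is contained in $[R^{*-1},\,R*_{_h}\delta_0]$ rather than in the translate $B_R*_{_h}\delta_0$; this does not affect anything since the former is still compact. Your observation that $|s*_{_h}t^{*-1}|_*\ge |s|_**_{_h}|t|_*^{*-1}$ gives vanishing at infinity is a clean alternative to the paper's appeal to the first displayed inequality.
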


\begin{proof}
 Let $t \ge e_*$ and $u \in C_0(\mathbb{R}_*, X)$ fixed. Note that
 $$
 \begin{array}{rl}
 \medskip
 ||T_tu(s)||_X &   \le ||U(s, s*_{_h}t^{*-1})|| \,  ||u(s*_{_h}t^{*-1})||_X \\
 \medskip
      & \le K [h(s*_{_h}(t*_{_h}s^{*-1})^{*-1})]^\alpha \, ||u||_{*\infty} \\
      \medskip
& = K \displaystyle \, \left[ \frac{h(s)}{h(s*_{_h}t^{*-1})}\right]^\alpha \, ||u||_{*\infty}  = K \displaystyle \, \left[h(t)\right]^\alpha \, ||u||_{*\infty},
      \end{array}
 $$ 
where the last identities are a consequence of (\ref{emulation}). 

The function  $s\mapsto T_{t}u(s)$ belongs to $C(\mathbb{R}_{*},X)$ since, as stated by equation (\ref{SemigPaper}),
is a composition of functions which are continuous with respect to $s$. In addition, the first above inequality  combined with $u(\cdot)\in C_{0}(\mathbb{R}_*, X)$ implies that $T_tu(s) \to 0$ when $s \to \pm \infty$ and consequently the map $T_t: C_0(\mathbb{R}_*, X) \to C_0(\mathbb{R}_*, X) $ is well defined.  

Now, we will verify the $h-$semigroup properties stated in Definition \ref{demigroupe}. The property (i) follows
directly from $e_{*}^{*-1}=e_{*}$. Indeed, notice that:
\begin{displaymath}
T_{e_*} u(s) = U(s,s*_{_h}e_*) u(s*_{_h}e_*) = U(s,s) u(s) = u(s).
\end{displaymath}

Now, given any pair $t, \tau \ge e_*$ and noticing that   $s \ge s*_{_h}t^{*-1} \ge s*_{_h}t^{*-1}*_{_h} \tau^{*-1},$ we have: 
$$\begin{array}{rl} 
\medskip T_{(t*_{_h}\tau)} u(s) & = U(s,s*_{_h}(t*_{_h}\tau)^{*-1}) u(s*_{_h}(t*_{_h}\tau)^{*-1})\\
\medskip 
& = U(s,s*_{_h}t^{*-1}) U(s*_{_h}t^{*-1},s*_{_h}t^{*-1}*_{_h}\tau^{*-1}) u(s*_{_h} t^{*-1}*_{_h}\tau^{*-1})\\
\medskip
& =  U(s,s*_{_h}t^{*-1}) T_\tau u(s*_{_h}t^{*-1}) = T_t T_\tau u(s),
\end{array} 
$$   
hence $T_{e_*} = \mathrm{Id}$ and $T_{t*_{_h}\tau} = T_tT_\tau.$ 

\medskip

We need to prove that $\{T_t\}_{t\ge e_*}$ is strongly continuous. Since $C_c(\mathbb{R}_*, X)$ is dense in $C_0(\mathbb{R}_*, X)$  it is enough to prove that
$$
\lim_{n \to \infty} T_{t_n}u(s_n) - u(s_n) = 0 
$$
for $t_n \to e_*, s_n \in \mathbb{R}_*$ for any $n$ and $u \in C_c(\mathbb{R}_*, X)$ fixed. 

\medskip

Case 1. If $\{s_n\}$ is unbounded, that is, $s_n \to \pm \infty$ then $s_n*_{_h} t_n^{*-1} \to \pm \infty.$ Since $u$ is of support compact, we obtain that  $T_{t_n}u(s_n) - u(s_n) = 0$ for $n$ large enough.

\medskip

Case 2. Suppose that $\{s_n\}$ is bounded and let $\varepsilon > 0.$ Since $u \in C_c(\mathbb{R}_*, X)$ and taking a subsequence $\{s_{n_k}\}_k$ of  $\{s_n\}$   if necessary, we can assume that $s_n \to s_0$ and then $u(s_n) \to u(s_0)$ for $n \to \infty.$ From this, 
\begin{equation}\label{cont_u}
||u(s_n) - u(s_0)||_X < \eta, \quad \mbox{where} \quad \eta  = \frac{\varepsilon}{2(K + 1)}
\end{equation}
for $n$ large enough and $K$ from \eqref{hcotaU}. Using again that $u$ is for compact support, it follows that $u$ is uniformly continuous, there exists $\delta_1(\varepsilon) > e_*$ such that 
\begin{equation}\label{UnifCont_u}
||u(s') - u(s'')||_X < \eta \ \ \mbox{if} \ \ d(s',s'') < \delta_1.
\end{equation}

\medskip

On the other hand, for $s \ge \tau,$  we know that $(s,\tau) \to U(s,\tau)x$ is continuous for each $x \in X.$ Choosing $x = u(s_0),$ if $x = 0$ then $T_{t_n}u(s_n) - u(s_n) = 0$ for $n \to \infty,$ thus, we can assume that $u(s_0) \ne 0.$ Now,  there exists $\delta_2(\varepsilon, u(s_0)) > e_*$ such that $d(s_n,\tau) < \delta_2$ implies
\begin{equation}\label{Cont_U}
||U(s_n, \tau) u(s_0) - u(s_0)||_X < \eta.
\end{equation}

Put $\delta(\varepsilon) = \min\{ \delta_1(\varepsilon), \delta_2(\varepsilon, u(s_0))\}  > e_*.$

\medskip

\noindent Note that  $d(s_n*_{_h}t_n^{*-1}, s_n) \to e_*,$ hence exists $N = N(\varepsilon) \in \mathbb{N}$ such that $d(s_n*_{_h}t_n^{*-1}, s_n) < \delta(\varepsilon)$ for $n > N$,
from (\ref{UnifCont_u})  we have

\begin{equation}\label{cota_u_n}
||u(s_n*_{_h}t_n^{*-1}) - u(s_n)||_X < \eta.
\end{equation}

Finally, from (\ref{cont_u}), (\ref{Cont_U}) and (\ref{cota_u_n}) we obtain
$$
\begin{array}{rl}
\medskip
    ||T_{t_n}u(s_n) - u(s_n)||_X = & ||U(s_n, s_n*t_n^{*-1}) u(s_n*_{_h} t_n^{*-1}) - u(s_n)||_X \\
    \medskip
    \le & \  ||U(s_n, s_n*_{_h}t_n^{*-1}) (u(s_n*_{_h}t_n^{*-1}) - u(s_n))||_X \\
    \medskip
     &  + \, ||U(s_n, s_n*_{_h} t_n^{*-1})( u(s_n) - u(s_0))||_X  \\  
\medskip
     &  + \, ||U(s_n, s_n*_{_h}t_n^{*-1}) u(s_0) - u(s_0)||_X   +  || u(s_0) - u(s_n)||_X  \\  
\medskip
\le & 2K [h(s_n*_{_h}(s_n*_{_h}t_n^{*-1})^{*-1}]^\alpha \eta + 2 \eta \\
\medskip
= & 2 K h(t_n)^\alpha \eta + 2 \eta = 2 \eta ( K [h(t_n)]^\alpha +   1) .  \end{array}
$$
Therefore $||T_{t_n}u(s_n) - u(s_n)||_X < \varepsilon$ for sufficiently large $n$, which completes the proof.
\end{proof}

\subsection{Generator and Resolvent of an $h-$semigroup.}

\begin{definition}\label{generadorA}
Let $Y$ be a Banach space. The  infinitesimal generator $A:D(A)\subset Y \to Y$ of a $C_0-$semigroup $\{S_t\}_{t \ge 0}$ on $Y$  is the operator 
$$
Ax := \lim_{t \to 0^+} \frac{S_t x - x}{t}
$$
defined for every $y$ on its domain 
$$
D(A) := \{y \in Y \, : \, \lim_{t \to 0^+} \frac{S_t y - y}{t}  \ \ \mbox{exists}\}.
$$
\end{definition}

In our particular case, for $C_0(\mathbb{R}_*, X)$ with being $X$ Banach space, we define generator as follows.

\begin{definition}\label{generadorB}
We say that $B_h:D(B_h) \subset C_0(\mathbb{R}_*, X) \to C_0(\mathbb{R}_*, X)$ is the  generator of an $h-$semigroup strongly continuous $\{T_t\}_{t \ge e_*}$ if
$$
B_h w = \lim_{t \to e_ *^+} \frac{T_t w -  w}{\ln(h(t))}
$$
with  domain 
$
D(B_h) = \displaystyle  \{w \in C_0(\mathbb{R}_*, X) \, : \, \lim_{t \to e_*^+} \frac{T_t w - w}{\ln(h(t))}  \ \ \mbox{exists}\}.
$
\end{definition}

\begin{Remark}  It is important to note that
\begin{enumerate}
    \item The above definition, the concept of limit tells us that
\begin{displaymath}
\forall \, \varepsilon > 0 \,\, \exists \, \delta > e_* \ \textnormal{such that for} \ w \in D(B_h), \ 
e_* < t < \delta \Rightarrow \left\|\frac{T_t w - w}{\ln(h(t))} - B_hw \right\|_{*\infty}<\varepsilon.
\end{displaymath}

\item If $h(t) = e^t$ then $\mathbb{R}_* = (\mathbb{R}, +, \cdot, |\cdot|)$ and Definition \ref{generadorB} matches with Definition \ref{generadorA} considering $Y = C_0(\mathbb{R}, X).$
\end{enumerate}
\end{Remark}

\subsection{Auxiliar semigroup}

Let $\mu: \mathbb{R}_* \to \mathbb{R}$ be defined by $\mu(t) = \ln(h(t)).$ The function $\mu$ is a homeomorphism whose inverse $\mu^{-1}: \mathbb{R} \to \mathbb{R}_*$ is $\mu^{-1}(t) = h^{-1}(e^t).$ Note that $\mu$ is strictly increasing and verify
\begin{equation}\label{liminf}
    \lim_{s \to -\infty} \mu(s) = -\infty \ \ \mbox{and} \ \  \lim_{s \to +\infty} \mu(s) = +\infty.
\end{equation}

For the $h-$evolution family $U = \{U(t_0,s_0)\}_{t_0\ge s_0}$ and for $t,s \in \mathbb{R}$ we define
\begin{equation}\label{DefV}
V(t,s) = U(\mu^{-1}(t), \mu^{-1} (s)), \ \ \mbox{for} \ \ t \ge s,
\end{equation}
and we write $V = \{V(t,s)\}_{t \ge s}.$

\begin{lemma}\label{VexpAcot}
Let $\{U(t_0,s_0)\}_{t_0\ge s_0}$ an $h-$evolution family on $X$ $h-$bounded. Then 
$V = \{V(t,s)\}_{t\ge s}$ is an evolution family on $X$ satisfying 
$$
||V(t,s)|| \le K e^{\alpha(t-s)},   \ \ \mbox{for} \ \ t \ge s
$$
with $\alpha, K$ as in \eqref{hcotaU}.
\end{lemma}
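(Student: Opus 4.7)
The plan is to verify that the reparametrization $V(t,s) := U(\mu^{-1}(t),\mu^{-1}(s))$ inherits the evolution-family axioms from $U$, and then to push the $h$-bound \eqref{hcotaU} through the change of variables using the key identity $h\circ\mu^{-1}=\exp$. The proof is essentially a computation, so I would split it into two short parts: the algebraic axioms and the norm estimate.

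First I would observe that $\mu(t)=\ln h(t)$ is a strictly increasing homeomorphism $\mathbb{R}_*\to\mathbb{R}$ (by \eqref{liminf}), hence so is $\mu^{-1}\colon\mathbb{R}\to\mathbb{R}_*$, and in particular $\mu^{-1}$ preserves the order. Consequently, for $t\ge\tau\ge s$ in $\mathbb{R}$ one has $\mu^{-1}(t)\ge\mu^{-1}(\tau)\ge\mu^{-1}(s)$ in $\mathbb{R}_*$, so each expression $U(\mu^{-1}(\cdot),\mu^{-1}(\cdot))$ is legitimately defined by the $h$-evolution family. Then I would verify the three defining properties directly:
\begin{itemize}
\item $V(t,t)=U(\mu^{-1}(t),\mu^{-1}(t))=\mathrm{Id}$;
\item $V(t,\tau)V(\tau,s)=U(\mu^{-1}(t),\mu^{-1}(\tau))\,U(\mu^{-1}(\tau),\mu^{-1}(s))=U(\mu^{-1}(t),\mu^{-1}(s))=V(t,s)$;
\item continuity of $(t,s)\mapsto V(t,s)x$ on $\{t\ge s\}$ follows from the continuity of the $h$-evolution family together with the continuity of $\mu^{-1}$ (composition of continuous maps).
\end{itemize}

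For the norm bound I would start from \eqref{hcotaU} written in the equivalent ratio form $\|U(t_0,s_0)\|\le K\bigl(h(t_0)/h(s_0)\bigr)^{\alpha}$, and substitute $t_0=\mu^{-1}(t)$, $s_0=\mu^{-1}(s)$. The crucial computation is
\[
h(\mu^{-1}(t))=h\bigl(h^{-1}(e^{t})\bigr)=e^{t},
\]
and likewise $h(\mu^{-1}(s))=e^{s}$. Therefore
\[
\|V(t,s)\|=\|U(\mu^{-1}(t),\mu^{-1}(s))\|\le K\left(\frac{e^{t}}{e^{s}}\right)^{\alpha}=K e^{\alpha(t-s)},
\]
which is the required estimate.

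There is no genuine obstacle here: the statement is a transparent consequence of the conjugation by $\mu$. The only point that deserves a line of care is checking that the ordering is preserved when passing from $(\mathbb{R},\le)$ to $(\mathbb{R}_*,\le)$, so that the $h$-evolution family identities apply at the reparametrized arguments; this is exactly where strict monotonicity of $h$ (and hence of $\mu$) is used. Everything else reduces to the cancellation $h\circ h^{-1}=\mathrm{Id}$ that converts the growth rate $h$ into the classical exponential.
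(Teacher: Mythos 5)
Your proposal is correct and follows essentially the same route as the paper: verify the evolution-family axioms via the order-preserving substitution $t_0=\mu^{-1}(t)$, $s_0=\mu^{-1}(s)$, and obtain the exponential bound from the identity $h(\mu^{-1}(t))=e^{t}$ applied to the ratio form of \eqref{hcotaU}. Your version is, if anything, slightly more explicit than the paper's on the continuity axiom, which the paper leaves implicit.
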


\begin{proof}
Is clear that $V(t,s),$ for $t \ge s$, be a family of bounded linear operators acting over $X$, and verify:
\begin{itemize}
    \item $V(t,t) = U(\mu^{-1}(t), \mu^{-1} (t)) = \mathrm{Id}, \, t \in \mathbb{R}.$
\medskip

\item $V(t,\tau) V(\tau,s) = U(\mu^{-1}(t), \mu^{-1} (\tau)) U(\mu^{-1}(\tau), \mu^{-1} (s)), \, s \le \tau \le t$ in $\mathbb{R}.$ Since $\mu^{-1}$ is increasing we have $\mu^{-1}(s) \le\mu^{-1}(\tau) \le \mu^{-1}(t),$ then 

$$V(t,\tau) V(\tau,s) = U(\mu^{-1}(t), \mu^{-1} (s)) = V(t,s).$$
    \end{itemize}

    Moreover, from the definition of $\mu^{-1}$ and using that $U$ is $h-$bounded,  we obtain
    $$
    \begin{array}{rl}
    \medskip
    ||V(t,s)||  = &  ||U(\mu^{-1}(t), \mu^{-1} (s))|| \\
    \medskip
     \le  & K [h(\mu^{-1}(t)* (\mu^{-1}(s)))^{*-1}]^\alpha \\
     \medskip
     = & K \displaystyle \left[ \frac{h(\mu^{-1}(t))}{h(\mu^{-1}(s))}\right]^\alpha = K \, e^{\alpha(t -s)}
    \end{array}
    $$
\noindent for  $t \ge s,$ and thus $V$ is exponentially bounded in the sense given by (\ref{decroissance}).   
\end{proof}

From the classical theory of evolution semigroups, we are now able to define the evolution semigroup strongly continuous $\{S_t\}_{t \ge 0}$ on $C_0(\mathbb{R}, X)$ associated to the evolution family $V$:
$$
S_tv(s) = V(s,s-t) v(s-t), \ t \ge 0, \ v \in  C_0(\mathbb{R}, X), \ s \in \mathbb{R},
$$
we denote $(A,D(A))$ its generator, that is,  $A: D(A) \subset C_0(\mathbb{R}, X) \to C_0(\mathbb{R}, X)$ with 

\begin{equation*}
D(A) = \{v \in C_0(\mathbb{R}, X)\, : \, \lim_{t \to 0^+} \frac{S_t v - v}{t}  \ \ \mbox{exists}\}.
\end{equation*}

\noindent 

Since $\mu$ is bijective, for $t, s \in \mathbb{R}$ there are $s_0, t_0 \in \mathbb{R}_*$ such that $\mu(s_0) = s$ and $\mu(t_0) = t$. Note that, $0 \le t = \mu(t_0) = \ln(h(t_0))$, 
from which it follows that $ h(t_0) \ge 1$ and therefore $t_0 \ge e_*.$ Moreover, by using the definition of $\mu^{-1}$ combined with (\ref{emulation}) we can deduce that
\begin{equation}\label{mu*}
\begin{array}{rl}
\medskip
    \mu^{-1}(\mu(s_0) - \mu(t_0)) = & \mu^{-1}(\ln(h(s_0)) - \ln(h(t_0))) \\
    \medskip
    = & \displaystyle \mu^{-1}\left(\ln\left(\frac{h(s_0)}{h(t_0)}\right)\right) \\
    \medskip
    = & \displaystyle  h^{-1}\left(\frac{h(s_0)}{h(t_0)}\right) = s_0*t_0^{*-1}.
    \end{array}
\end{equation}

Hence, by (\ref{DefV}) we can see that
$$
\begin{array}{rl}
\medskip
S_tv(\mu(s_0)) = & V(\mu(s_0), \mu(s_0) - \mu(t_0)) \, v(\mu(s_0)-\mu(t_0)) \\
\medskip
= & U(s_0, \mu^{-1}(\mu(s_0) - \mu(t_0))) \, v(\mu(s_0)-\mu(t_0)).
\end{array}
$$
Letting $w = v \circ \mu,$ using (\ref{liminf}) is clear that $w \in C_0(\mathbb{R}_*, X),$ from (\ref{SemigPaper}) and (\ref{mu*}) we obtain

$$
\begin{array}{rl}
\medskip
S_t(w \circ \mu^{-1})(\mu(s_0)) = & U(s_0, \mu^{-1}(\mu(s_0) - \mu(t_0))) \, w(\mu^{-1}(\mu(s_0)-\mu(t_0))) \\
\medskip
= & U(s_0, s_0*_{_h}t_0^{*-1}) \, w(s_0*_{_h} t_0^{*-1}) = T_{t_0} w(s_0),
\end{array}
$$
where, $(S_t(w\circ \mu^{-1})) \circ \mu = T_{t_0}w$ for any $w \in C_0(\mathbb{R}_*, X).$

\medskip 

Now, define the operator $\mathcal{F}: C_0(\mathbb{R}_*, X)  \to C_0(\mathbb{R}, X)$ by $\mathcal{F}(w) = w \circ \mu^{-1}.$ Is clear that $\mathcal{F}$ is an invertible, bounded linear operator, with its inverse $\mathcal{F}^{-1}v = v \circ \mu.$ It follows that
$$
T_{t_0}w = (S_t(\mathcal{F}(w))) \circ \mu = \mathcal{F}^{-1} (S_t(\mathcal{F}(w))), \ \ \mbox{for every} \ \ w \in C_0(\mathbb{R}_*, X),$$

\noindent and thus 

\begin{equation}
\label{SimSem}
T_{t_0} = \mathcal{F}^{-1} S_t \mathcal{F},  \ \ \mbox{with} \quad  t \ge 0 \quad \textnormal{such that} \quad \mu^{-1}(t) = t_0 \ge e_*.    
\end{equation}

\medskip


The next result characterizes the $h$--generator $B_{h}$ of $\{T_{t_{0}}\}_{t_{0}\geq e_{*}}$ (cf. Definition \ref{generadorB}) in terms of the infinitesimal
generator of $\{S_{t}\}_{t\geq 0}$.

\begin{theorem}\label{AgenB}
  Let $\mu, \ \mathcal{F}$ as above, $\{T_{t_0}\}_{t_0 \ge e_*}$ the evolution $h-$semigroup associated with $U$ and $\{S_t\}_{t \ge 0}$ the evolution semigroup associated with $V$ and generator $(A,D(A))$.
  Then the $h-$generator $B_h$ of $\{T_{t_0}\}_{t_0 \ge e_*}$ is given by
\begin{equation}\label{GenerSem}
B_hw = (\mathcal{F}^{-1} A \mathcal{F}) w = A (w \circ \mu^{-1}) \circ \mu
\end{equation}
with domain 
$$
D(B_h) = \displaystyle  \{w \in C_0(\mathbb{R}_*, X) \, : \, w \circ \mu^{-1} \in D(A)\}.
$$

  \end{theorem}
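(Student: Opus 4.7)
The plan is to exploit the similarity relation $T_{t_0} = \mathcal{F}^{-1} S_t \mathcal{F}$ established in \eqref{SimSem}, where $t = \mu(t_0) = \ln(h(t_0))$, and to transfer the defining limit of the $h$-generator $B_h$ to the classical infinitesimal limit of the $C_0$-semigroup $\{S_t\}_{t\ge 0}$ via the bounded linear bijection $\mathcal{F}$.

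First I would observe that $\mu(e_*) = \ln(h(e_*)) = \ln(1) = 0$ and that $\mu$ is a strictly increasing homeomorphism of $\mathbb{R}_*$ onto $\mathbb{R}$, so $t_0 \to e_*^+$ if and only if $t = \mu(t_0) \to 0^+$. Combining this change of variable with \eqref{SimSem}, for any $w \in C_0(\mathbb{R}_*,X)$ the difference quotient that defines $B_h$ rewrites as
$$
\frac{T_{t_0} w - w}{\ln(h(t_0))} \;=\; \frac{\mathcal{F}^{-1} S_t \mathcal{F} w - \mathcal{F}^{-1} \mathcal{F} w}{t} \;=\; \mathcal{F}^{-1}\!\left(\frac{S_t (\mathcal{F} w) - \mathcal{F} w}{t}\right),
$$
where the last equality uses only the linearity of $\mathcal{F}^{-1}$ together with $\mathcal{F}^{-1}\mathcal{F} w = w$.

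Next I would use that $\mathcal{F} \colon C_0(\mathbb{R}_*,X) \to C_0(\mathbb{R},X)$ given by $\mathcal{F}(w) = w \circ \mu^{-1}$ is an isometric isomorphism (the sup norm is preserved because $\mu$ is a bijection of the index spaces, and the property \eqref{liminf} guarantees that vanishing at $\pm\infty$ is transported in both directions). Consequently $\mathcal{F}^{-1}$ is a bounded linear operator on the corresponding Banach spaces and therefore commutes with limits in norm. Hence the limit defining $B_h w$ in $C_0(\mathbb{R}_*,X)$ exists if and only if the limit defining $A(\mathcal{F} w)$ in $C_0(\mathbb{R},X)$ exists, in which case
$$
B_h w \;=\; \mathcal{F}^{-1}\!\left(\lim_{t\to 0^+}\frac{S_t(\mathcal{F} w) - \mathcal{F} w}{t}\right) \;=\; \mathcal{F}^{-1} A \mathcal{F} w \;=\; A(w\circ\mu^{-1}) \circ \mu.
$$
This equivalence simultaneously produces formula \eqref{GenerSem} and the identification $D(B_h) = \{w\in C_0(\mathbb{R}_*,X) : w\circ\mu^{-1}\in D(A)\}$.

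I do not anticipate any serious obstacle: the only point deserving attention is the clean justification that passing the limit through $\mathcal{F}^{-1}$ is an if-and-only-if statement, which ultimately reduces to $\mathcal{F}$ and $\mathcal{F}^{-1}$ being isometries between the respective $C_0$-spaces, together with the elementary fact that the scalar map $t_0 \mapsto \ln(h(t_0))$ is a strictly increasing homeomorphism from a neighbourhood of $e_*$ in $\mathbb{R}_*$ onto a neighbourhood of $0$ in $\mathbb{R}$. Once these remarks are in place, the theorem follows by unpacking Definition \ref{generadorB} and applying Lemma \ref{VexpAcot} to ensure that $\{S_t\}_{t\ge 0}$ really is a classical evolution $C_0$-semigroup with a well-defined generator $(A,D(A))$.
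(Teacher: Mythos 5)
Your proposal is correct and follows essentially the same route as the paper: conjugate via $T_{t_0}=\mathcal{F}^{-1}S_t\mathcal{F}$, change variables $t=\mu(t_0)=\ln(h(t_0))$ so that $t_0\to e_*^+$ corresponds to $t\to 0^+$, and pass the limit through the bounded bijection $\mathcal{F}$. If anything, you are slightly more careful than the paper in stressing that the exchange of limit with $\mathcal{F}^{-1}$ is an if-and-only-if, which is exactly what justifies the stated equality of domains rather than a single inclusion.
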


\begin{proof} Let $w \in D(B_h) \subset C_0(\mathbb{R}_*, X).$ By Definition \ref{generadorB} we have that 
$$
\begin{array}{rcl}
\medskip 
\mathcal{F}B_hw  & = & \displaystyle\mathcal{F}  \lim_{t_0 \to e_*^+} \frac{T(t_0) w - w}{\ln(h(t_0))} \\
& = & \displaystyle \lim_{t_0 \to e_*^+} \frac{\mathcal{F}T(t_0) w -\mathcal{F} w}{\ln(h(t_0))}   \\
\medskip
& = & \displaystyle  \lim_{t \to 0^+} \frac{\mathcal{F} T(\mu^{-1}(t)) w -\mathcal{F} w}{t} \\
\medskip
& = & \displaystyle  \lim_{t \to 0^+} \frac{S(t) \mathcal{F} w - \mathcal{F} w}{t} = A \mathcal{F} \omega\\
\end{array}
$$

\noindent for $ \mathcal{F}w \in D(A).$  Consequently, if $(w \circ \mu^{-1}) \in D(A)$ we obtain that $B_h  w  = \mathcal{F}^{-1} A \, \mathcal{F} w = A (w \circ \mu^{-1}) \circ \mu.$

\end{proof}

Let $A\colon D(A)\subset C_{0}(\mathbb{R},X)\to C_{0}(\mathbb{R},X)$ be the generator of the $C_0-$semigroup $\{S_t\}_{t \ge 0}$ defined on $C_0(\mathbb{R}, X).$ The resolvent set of $A$ is composed by the complex numbers $\lambda$ for which $\lambda I - A$ is invertible and is denoted by $\rho(A)$, i.e., 
$$
\rho(A) = \{ \lambda \in \mathbb{C} \, : \, (\lambda I - A)^{-1} \ \mbox{is a bounded linear operator in} \ C_0(\mathbb{R}, X) \}.
$$
The family $R(\lambda,A) = (\lambda I - A)^{-1}, \, \lambda \in \rho(A)$ of a bounded linear operator is called the resolvent of $A$ and $\sigma(A) = \mathbb{C}- \rho(A)$ is the spectral set of $A$.

\medskip

Similarly, for $B_h$  the linear operator with domain $D(B_h)$  generator of the $h-$semigroup $\{T_{t_0}\}_{t_0 \ge e_*}$ defined on $C_0(\mathbb{R}_*, X),$ the $h-$resolvent set $\rho(B_h)$ of $B_h$ is the set of all complex number $\lambda$ for which $\lambda I - B_h$ is invertible, i.e., 
$$
\rho(B_h) = \{ \lambda \in \mathbb{C} \, : \, (\lambda I - B_h)^{-1} \ \mbox{is a bounded linear operator in} \ C_0(\mathbb{R}_*, X) \}.
$$
The family $R(\lambda,B_h) = (\lambda I - B_h)^{-1}, \, \lambda \in \rho(B_h)$ of a bounded linear operator is called the $h-$resolvent of $B_h$ and $\sigma(B_h) = \mathbb{C}- \rho(B_h)$ the spectral set of $B_h$.

\begin{lemma}\label{espectros=}
In the conditions of Theorem \ref{AgenB} we have that $\sigma(B_h)=\sigma(A).$
\end{lemma}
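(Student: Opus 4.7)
The plan is to exploit the similarity relation $B_h = \mathcal{F}^{-1} A \mathcal{F}$ established in Theorem \ref{AgenB}, together with the fact that $\mathcal{F}\colon C_0(\mathbb{R}_*, X) \to C_0(\mathbb{R}, X)$ is a bounded linear bijection with bounded inverse $\mathcal{F}^{-1}v = v\circ \mu$. This is the classical argument that \emph{similar operators have the same spectrum}, adapted to the unbounded setting where one must pay attention to domains.

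First, I would fix $\lambda \in \mathbb{C}$ and observe that, for every $w \in D(B_h)$, Theorem \ref{AgenB} combined with the linearity of $\mathcal{F}$ yields
\begin{equation*}
(\lambda I - B_h)w = \lambda w - \mathcal{F}^{-1} A \mathcal{F} w = \mathcal{F}^{-1}\bigl((\lambda I - A)\mathcal{F} w\bigr),
\end{equation*}
which is meaningful precisely because $w \in D(B_h)$ is equivalent to $\mathcal{F}w \in D(A)$ by the description of $D(B_h)$ in Theorem \ref{AgenB}. In other words, $\mathcal{F}$ maps $D(B_h)$ bijectively onto $D(A)$, and the operator identity $\lambda I - B_h = \mathcal{F}^{-1}(\lambda I - A)\mathcal{F}$ holds as an equality of maps $D(B_h) \to C_0(\mathbb{R}_*, X)$.

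Next, I would use this identity to establish the equality $\rho(A) = \rho(B_h)$. If $\lambda \in \rho(A)$, then $R(\lambda, A) = (\lambda I - A)^{-1}$ is a bounded operator on $C_0(\mathbb{R}, X)$ with range $D(A)$. Setting $R := \mathcal{F}^{-1} R(\lambda, A)\, \mathcal{F}$, the operator $R$ is bounded on $C_0(\mathbb{R}_*, X)$, takes values in $\mathcal{F}^{-1}(D(A)) = D(B_h)$, and a direct computation using the identity above shows $R(\lambda I - B_h) = I|_{D(B_h)}$ and $(\lambda I - B_h)R = I$, so $\lambda \in \rho(B_h)$. The reverse inclusion is entirely symmetric, using $A = \mathcal{F} B_h \mathcal{F}^{-1}$ and the inverse $\mathcal{F}(\lambda I - B_h)^{-1}\mathcal{F}^{-1}$.

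Taking complements yields $\sigma(B_h) = \mathbb{C}\setminus \rho(B_h) = \mathbb{C}\setminus \rho(A) = \sigma(A)$. There is no real obstacle here beyond the bookkeeping of domains: the only point requiring a moment's care is verifying that the characterization $D(B_h) = \{w : w\circ\mu^{-1} \in D(A)\}$ provided by Theorem \ref{AgenB} is exactly what is needed so that the conjugated resolvent $\mathcal{F}^{-1} R(\lambda, A)\mathcal{F}$ lands in $D(B_h)$; everything else is a routine algebraic manipulation exploiting that $\mathcal{F}$ is a topological isomorphism.
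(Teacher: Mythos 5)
Your argument is correct and is essentially the paper's own proof, which simply records the observation that $\lambda\in\rho(A)$ iff $(\lambda I-A)^{-1}=\mathcal{F}(\lambda I-B_h)^{-1}\mathcal{F}^{-1}$ iff $\lambda\in\rho(B_h)$; you have merely spelled out the domain bookkeeping that the paper leaves implicit. No changes needed.
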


\begin{proof}
From the Theorem  \ref{AgenB} it is enough to observe that 
$$
\lambda \in \rho(A) \iff (\lambda I - A)^{-1} = \mathcal{F} (\lambda I - B_h)^{-1} \mathcal{F}^{-1} \iff \lambda \in \rho(B_h).$$
\end{proof}

\begin{Remark}
Consequently, spectral properties of the \(h\)-evolution semigroup \(\{T_{t_0}\}_{t_0\ge e_*}\) follow directly from the classical evolution semigroup theory via the conjugates \(T_{t_0}= \mathcal{F}^{-1}S_t \mathcal{F}\) and \(B_h= \mathcal{F}^{-1}A\mathcal{F}\), with $t_0 = \mu^{-1}(t).$ See figure below.

\begin{center}
\begin{tikzcd}[column sep=large, row sep=large]
C_0(\mathbb{R}_*, X) 
  \arrow[r, "\mathcal{F}"] 
& 
C_0(\mathbb{R}, X) 
  \arrow[d, "{\{S_t\}}_{t \ge 0}"] 
\\
C_0(\mathbb{R}_*, X) 
  \arrow[u, "{\{T_{t_0}\}}_{t_0 \ge e_*}"] 
& 
C_0(\mathbb{R}, X) 
  \arrow[l, "\mathcal{F}^{-1}"]
\end{tikzcd}
\end{center}

\end{Remark}

\subsection{Measure on the $\mathbb{R}_*$}

Following the ideas developed in \cite{JFP}, we define a measure $\mu_*$ on $\mathbb{R}_*$, which is invariant under the action of elements of $\mathbb{R}_*.$ For this purpose we will assume that $h$ is a derivable function. The measure is an absolutely continuous and  its Radon-Nikodym derivative is the logarithmic derivative of $h$, that is, for Borel measurable set $R \subset \mathbb{R}_*$ we define
\begin{equation*}
 \mu_*(R) = \int_R \frac{h'(\tau)}{h(\tau)} dm(\tau), 
\end{equation*}
where $m(\cdot)$ is the Lebesgue measure. To see that this an invariant measure, it is enough to prove its is invariant for compact intervals, as these sets generate the Borel $\sigma-$algebra.

\begin{lemma}
Given a compact interval $[a,b] \subset \mathbb{R}_*$ the measure $\mu_*$ is invariant under operation $*$  by elements of $\mathbb{R}_*,$ that is, for any $\gamma \in \mathbb{R}_*$ and $\alpha \in \mathbb{R}$ 
$$
\mu_*([a,b]) = \mu_*([\gamma*a,\gamma*b]).$$
Moreover, for $\alpha \in \mathbb{R}$
$$
 \mu_*(\alpha \odot [a, b]) = |\alpha| \, \mu_*([a, b]).
 $$
\end{lemma}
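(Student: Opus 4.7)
The plan is to prove both invariance statements by a direct change-of-variables argument, exploiting the multiplicative compatibility of $h$ with $*_{_h}$ and $\odot$ given by \eqref{emulation} and \eqref{LCE}.

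For the translation invariance, I would fix $\gamma \in \mathbb{R}_*$ and apply the substitution $\tau = \gamma *_{_h} u = h^{-1}(h(\gamma)h(u))$ in the integral defining $\mu_*([\gamma *_{_h} a, \gamma *_{_h} b])$. From $h(\tau) = h(\gamma)h(u)$ one gets $h'(\tau)\, d\tau = h(\gamma)h'(u)\, du$, and consequently
\begin{equation*}
\frac{h'(\tau)}{h(\tau)}\, d\tau \;=\; \frac{h'(u)}{h(u)}\, du.
\end{equation*}
Since $u = a$ corresponds to $\tau = \gamma *_{_h} a$ and $u = b$ to $\tau = \gamma *_{_h} b$ (and the map $u \mapsto \gamma *_{_h} u$ is strictly increasing by \eqref{group4}), the integral transforms into $\int_a^b \tfrac{h'(u)}{h(u)}\, du$, proving the first identity.

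For the scaling property, I would split on the sign of $\alpha$. When $\alpha > 0$, Proposition \ref{Prop_odot}(3) yields $\alpha \odot [a,b] = [\alpha \odot a, \alpha \odot b]$, and the substitution $\tau = \alpha \odot u = h^{-1}(h(u)^{\alpha})$ gives $h(\tau) = h(u)^{\alpha}$, hence $h'(\tau)\, d\tau = \alpha\, h(u)^{\alpha-1} h'(u)\, du$, which collapses to
\begin{equation*}
\frac{h'(\tau)}{h(\tau)}\, d\tau \;=\; \alpha\, \frac{h'(u)}{h(u)}\, du.
\end{equation*}
Integrating between the corresponding limits yields $\mu_*(\alpha \odot [a,b]) = \alpha\, \mu_*([a,b]) = |\alpha|\,\mu_*([a,b])$. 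When $\alpha < 0$, Proposition \ref{Prop_odot}(3) reverses orientation so $\alpha \odot [a,b] = [\alpha \odot b, \alpha \odot a]$; the same substitution applies, but now the change of limits produces an extra sign that combines with the factor $\alpha < 0$ to give $|\alpha|\,\mu_*([a,b])$. The case $\alpha = 0$ is immediate, since $0 \odot t = e_*$ makes the image a singleton of $\mu_*$-measure zero.

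The main technical point to keep clear will be the orientation bookkeeping in the negative-scalar case, together with checking that the substitutions are genuine $C^1$ diffeomorphisms from $[a,b]$ onto its image: this uses that $h$ is a strictly increasing homeomorphism, that we have assumed $h$ differentiable, and that $h > 0$ (so $h'/h$ makes sense and $u \mapsto h^{-1}(h(u)^{\alpha})$ is differentiable with non-vanishing derivative for $\alpha \neq 0$). Once these regularity points are checked, both identities reduce to one-line Jacobian calculations.
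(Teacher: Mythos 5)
Your route is genuinely different from the paper's. The paper does not perform any change of variables: it simply evaluates the integral in closed form, $\mu_*([a,b])=\int_a^b \tfrac{h'(\tau)}{h(\tau)}\,dm(\tau)=\ln(h(b))-\ln(h(a))$, and then both identities drop out in one line from the multiplicative rules $h(\gamma*_{_h}a)=h(\gamma)h(a)$ and $h(\alpha\odot a)=h(a)^{\alpha}$, since the factor $\ln(h(\gamma))$ cancels in the difference and the exponent $\alpha$ factors out of the logarithms (the orientation bookkeeping for $\alpha<0$ is handled exactly as you describe, via Proposition \ref{Prop_odot}(3)). Your Jacobian substitution reaches the same conclusion, and the formal identity $\tfrac{h'(\tau)}{h(\tau)}\,d\tau=\tfrac{h'(u)}{h(u)}\,du$ is a nice way to see \emph{why} the density $h'/h$ is the right one; but it is strictly more work and, more importantly, it needs more regularity than the paper assumes.

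The concrete weak point is your claim that the substitutions $u\mapsto\gamma*_{_h}u=h^{-1}(h(\gamma)h(u))$ and $u\mapsto h^{-1}(h(u)^{\alpha})$ are $C^1$ diffeomorphisms with non-vanishing derivative. This requires $h^{-1}$ to be differentiable, i.e.\ $h'$ to be nowhere zero, which is not part of the hypotheses (the paper only assumes $h$ differentiable) and fails for the paper's own running example $h(t)=e^{(t-2)^3}$, whose derivative vanishes at $t=2$: there the map $u\mapsto\gamma*_{_h}u$ has a vertical tangent at $u=\gamma^{*-1}$ and your implicit relation $h'(\tau)\,d\tau=h(\gamma)h'(u)\,du$ degenerates to $0=0$. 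The conclusion is still correct, but to justify the change of variables at such points you would need an extra argument (absolute continuity of the substitution, or excising the bad points), whereas the antiderivative computation $\int_a^b \tfrac{h'}{h}=\ln h(b)-\ln h(a)$ sidesteps the issue entirely. I would recommend replacing the Jacobian argument by that direct evaluation, after which your case analysis on the sign of $\alpha$ (including the trivial case $\alpha=0$) goes through verbatim.
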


\begin{proof}
The measure of the interval $[a,b] \subset \mathbb{R}_*$ is
$$
\mu_*([a, b]) = \int_a^b \frac{h'(\tau)}{h(\tau)} dm(\tau) = \ln(h(b)) - \ln(h(a)) = \mu(b) - \mu(a).
$$  
Given any constant $\gamma \in \mathbb{R}_*$ we obtain
$$
\mu_*([\gamma *_{_h} a,\gamma *_{_h} b]) = \ln\left( \frac{h(\gamma*_{_h}a)}{h(\gamma*_{_h}b)}\right) = \ln\left( \frac{h(a)}{h(b)}\right) = \mu_*([a, b]).$$

Note that $\mu_*([a, b]) = -\mu_*([b, a]).$ If $a\le b$ and  $\alpha < 0$ from Proposition \ref{Prop_odot} we have that  $\alpha \odot b < \alpha  \odot a$
$$
\begin{array}{rl}
\medskip
\mu_*(\alpha \odot [a, b]) & = \mu_*([\alpha \odot b, \alpha \odot a]) \\
\medskip
& = \ln(h(\alpha \odot a)) - \ln(h(\alpha \odot b)) \\
\medskip
&= \ln(h(a)^\alpha) - \ln(h(b)^\alpha) \\
\medskip
&= \alpha (\ln(h(a)) - \ln(h(b))) \\
\medskip
&= -\alpha \mu_*([a,b]) = |\alpha| \, \mu_*([a,b]). 
\end{array}
$$   

For $a\le b$ and  $\alpha > 0$ from Proposition \ref{Prop_odot} we have that  $\alpha \odot a < \alpha  \odot b$
$$
\begin{array}{rl}
\medskip
\mu_*(\alpha \odot [a, b]) & = \mu_*([\alpha \odot a, \alpha \odot b]) \\
\medskip
&= \ln(h(b)^\alpha) - \ln(h(a)^\alpha) \\
\medskip
&= \alpha (\ln(h(b)) - \ln(h(a))) \\
\medskip
&=  |\alpha| \, \mu_*([a,b]).
\end{array}
$$

\end{proof}

The following Lemma proved in \cite{JFP} establishes the existence of a partition of $\mathbb{R}_*$ into intervals of
constant $\mu_*$ measure. 

\begin{lemma}
For any $\gamma \in (e_*, \infty)$ the intervals $I_k= [k \odot \gamma, (k+1) \odot \gamma]$ with $k \in \mathbb{Z}$ define a partition of $\mathbb{R}_*$ into sets of constant $\mu_*$ measure.
\end{lemma}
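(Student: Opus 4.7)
The plan is to prove two statements separately: first, that the family $\{I_k\}_{k\in\mathbb Z}$ covers $\mathbb R_{*}$ with only endpoint overlaps (so it is a partition in the measure–theoretic sense), and second, that each $I_k$ has the same $\mu_*$–measure. Both reduce quickly to the identity $h(k\odot\gamma)=h(\gamma)^k$, which follows from \eqref{LCE} applied with integer $\alpha=k$.

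First I would record the basic monotonicity. Since $\gamma>e_*$ we have $h(\gamma)>1$, so $k\mapsto h(\gamma)^k$ is strictly increasing on $\mathbb Z$, and because $h^{-1}$ is increasing, the sequence $k\mapsto k\odot\gamma=h^{-1}(h(\gamma)^k)$ is also strictly increasing. In particular, the intervals $I_k=[k\odot\gamma,(k+1)\odot\gamma]$ are non-degenerate and satisfy $I_k\cap I_{k+1}=\{(k+1)\odot\gamma\}$, while $I_k\cap I_j=\varnothing$ for $|k-j|\ge 2$.

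Next I would verify that these intervals cover $\mathbb R_{*}$. Using the continuity of $h^{-1}$,
\[
\lim_{k\to+\infty}k\odot\gamma=h^{-1}\Bigl(\lim_{k\to+\infty}h(\gamma)^k\Bigr)=h^{-1}(+\infty)=+\infty,
\qquad
\lim_{k\to-\infty}k\odot\gamma=h^{-1}(0)=-\infty.
\]
Given $t\in\mathbb R_{*}$, set $k:=\lfloor\log_{h(\gamma)}h(t)\rfloor\in\mathbb Z$, so that $h(\gamma)^k\le h(t)<h(\gamma)^{k+1}$; applying $h^{-1}$ yields $k\odot\gamma\le t<(k+1)\odot\gamma$, hence $t\in I_k$. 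Thus $\bigcup_{k\in\mathbb Z}I_k=\mathbb R_{*}$.

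Finally, for the constant measure, I would invoke the formula $\mu_*([a,b])=\ln(h(b))-\ln(h(a))$ obtained in the previous lemma and compute
\[
\mu_*(I_k)=\ln\!\bigl(h((k+1)\odot\gamma)\bigr)-\ln\!\bigl(h(k\odot\gamma)\bigr)=\ln\!\bigl(h(\gamma)^{k+1}\bigr)-\ln\!\bigl(h(\gamma)^{k}\bigr)=\ln h(\gamma),
\]
which is independent of $k$ and strictly positive since $h(\gamma)>1$. Alternatively, since $(k+1)\odot\gamma=\gamma*_{_h}(k\odot\gamma)$ (easily checked via $h$), the translation invariance established in the same lemma gives $\mu_*(I_k)=\mu_*([e_*,\gamma])=\ln h(\gamma)$ directly. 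There is no real obstacle here; the only subtlety worth highlighting is to recall why $h(\gamma)>1$, namely that $\gamma>e_*=h^{-1}(1)$ combined with the strict monotonicity of $h$.
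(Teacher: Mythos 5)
Your proof is correct. Note that the paper itself does not prove this lemma --- it only cites \cite{JFP} --- so there is no in-text argument to compare against; your proposal supplies a complete, self-contained verification. The two key points are exactly the right ones: the identity $h(k\odot\gamma)=h(\gamma)^{k}$ together with $h(\gamma)>1$ gives strict monotonicity of $k\mapsto k\odot\gamma$ and, via the homeomorphism property of $h$, the covering of all of $\mathbb{R}_*$ (with only endpoint overlaps, which are $\mu_*$-null); and the formula $\mu_*([a,b])=\ln(h(b))-\ln(h(a))$ from the preceding lemma immediately yields $\mu_*(I_k)=\ln h(\gamma)$ for every $k$. Your alternative route via $(k+1)\odot\gamma=\gamma*_{_h}(k\odot\gamma)$ and the translation invariance of $\mu_*$ is equally valid and arguably the more structural argument, since it uses the invariance property that motivates introducing $\mu_*$ in the first place.
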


A consequence of the above result is:

\begin{corollary}
 The measure $\mu_*$  is $\sigma-$finite.  
\end{corollary}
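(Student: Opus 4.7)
The plan is to invoke the partition provided by the previous lemma directly. Recall that $\sigma$-finiteness means that the whole space can be written as a countable union of measurable sets of finite $\mu_*$-measure, so it is enough to exhibit such a decomposition of $\mathbb{R}_*$.

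First, I would fix any $\gamma \in (e_*,+\infty)$ and consider the family $\{I_k\}_{k\in\mathbb{Z}}$ with $I_k = [k\odot\gamma,(k+1)\odot\gamma]$. By the preceding lemma this family is a partition of $\mathbb{R}_*$, so in particular
\[
\mathbb{R}_* = \bigcup_{k\in\mathbb{Z}} I_k,
\]
and the union is countable because it is indexed by $\mathbb{Z}$.

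Next, I would check that each $I_k$ has finite measure. Using the explicit expression $\mu_*([a,b]) = \ln(h(b))-\ln(h(a))$ derived in the previous lemma together with the identity $h(k\odot\gamma)=h(\gamma)^k$ coming from \eqref{LCE}, one gets
\[
\mu_*(I_k) = \ln\!\bigl(h(\gamma)^{k+1}\bigr)-\ln\!\bigl(h(\gamma)^{k}\bigr) = \ln(h(\gamma)),
\]
which is independent of $k$, as already noted in the previous lemma. Since $\gamma>e_*=h^{-1}(1)$ we have $h(\gamma)>1$, so $\ln(h(\gamma))\in(0,+\infty)$; in particular $\mu_*(I_k)<+\infty$ for every $k\in\mathbb{Z}$. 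Combining this with the countable decomposition above yields the $\sigma$-finiteness of $\mu_*$.

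There is essentially no obstacle here: the work has already been done in the preceding two lemmas, which provide both the partition and the explicit value of its $\mu_*$-measure. The only small point worth emphasizing in the write-up is why $\ln(h(\gamma))$ is finite (hence the condition $\gamma>e_*$ rather than merely $\gamma\neq e_*$), and that $\mathbb{Z}$ is countable; both are immediate.
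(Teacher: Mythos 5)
Your proof is correct and follows exactly the route the paper intends: the corollary is stated as an immediate consequence of the partition lemma, and you supply the (omitted) verification that each $I_k$ has the finite measure $\ln(h(\gamma))$ and that the union over $k\in\mathbb{Z}$ is countable.
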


\section{Applications of evolution $h-$semigroup to general $h-$dichotomies}

In this section we adopt a semigroup viewpoint for $h$-evolution families. We construct
the evolution $h$-semigroup $\{T_t\}_{t\ge e_*}$ on $C_0(\mathbb{R}_*,X)$ and relate its
infinitesimal generator to the asymptotic behaviour encoded by the $h$-dichotomy.
In particular, we obtain a spectral characterization of $h$-dichotomy in terms of the
invertibility of the generator and the absence of spectrum on the imaginary axis, thus
extending to the $h$-framework the classical results known in the exponential case.

\begin{definition}
 Let $\{T_{t_0}\}_{t_0 \ge e_*}$ an $h-$semigroup strongly continuous. We will say that $\{T_{t_0}\}_{t_0 \ge e_*}$ is $h-$hyperbolic if there exists a projection $\mathcal{P}_h$ on the Banach space $(Y,||\cdot||_{Y})$ satisfying 
 $$
 T_{t_0} \mathcal{P}_h = \mathcal{P}_hT_{t_0}, \ \ t_0 \ge e_{*}, 
 $$
 and the following conditions hold:
\begin{itemize}
    \item the map $T_{t_0}|_{\mathcal{Q}_hY}: \mathcal{Q}_hY \to \mathcal{Q}_hY$ is invertible for each $t \ge e_*$, where $\mathcal{Q}_h = I - \mathcal{P}_h$;

    \item there exist $\nu > 0$ and $N \ge 1$ such that 
    $$
    ||T_{t_0} \mathcal{P}_hy||_Y \le N \, h(t_0)^{-\nu} ||y||_{Y} \ \ \mbox{and} \ \ ||(T_{t_0})^{-1}_{\mathcal{Q}_h} \,  \mathcal{Q}_hy||_Y \le N \, h(t_0)^{-\nu} ||y||_{Y}    $$
    \noindent for $t_0 \ge e_*.$
\end{itemize}

\end{definition}

In the case where $h(t) = e^t$ we recover 
the classical concept of hyperbolicity: 

\begin{definition}
\label{Def10}
 Let $(Z,||\cdot||_{Z})$ be a Banach space and $\{S_{t}\}_{t \ge 0}$ a semigroup on $Z$ strongly continuous. We will say that $\{S_{t}\}_{t \ge 0}$ is hyperbolic if there exists a projection $\mathcal{P}$ on $Z$ that satisfies 
 \begin{equation}
 \label{commut}
 S_t \mathcal{P} = \mathcal{P} S_{t}, \ \ t \ge 0, 
 \end{equation}
 and the following conditions hold:
\begin{itemize}
    \item the map $S_{t}|_{\mathcal{Q}Z}: \mathcal{Q} Z \to \mathcal{Q}Z$ is invertible for each $t \ge 0$, where $\mathcal{Q} = I - \mathcal{P}$;

    \item thereexist $\nu > 0$ and $N \ge 1$ such that 
    $$
    ||S_{t} \mathcal{P}z||_Z \le N \, e^{-\nu t} ||z||_{Z} \ \ \mbox{and} \ \ ||(S_{t}|_{\mathcal{Q}Z})^{-1}  \mathcal{Q}_hz||_Z\le N \, e^{-\nu t} ||z||_{Z}    $$
    \noindent for $t \ge 0.$
\end{itemize}

\end{definition}

\medskip

In the following, we suppose that $U = \{U(t_0, s_0)\}_{t_0 \ge s_0}$ is an $h-$evolution family on $X$ which is $h-$bounded is the sense of (\ref{hcotaU}). Let   $\{T_{t_0} \}_{t_0 \ge e_*}$ be the corresponding evolution $h-$semigroup given by Proposition \ref{C0semigroup} and $(B_h, D(B_h))$ its generator. 

\medskip

We consider the evolution family $V$ given by (\ref{DefV}) and its corresponding evolution semigroup $\{S_t\}_{t \ge 0}$ on $C_0(\mathbb{R}, X)$, that is, 
$S_tv(s) = V(s,s-t) v(s-t), \ t \ge 0, \ v \in  C_0(\mathbb{R}, X), \ s \in \mathbb{R},$ and $(A,D(A))$ its generator.

\medskip

It is well-known that a strongly continuous semigroup is hyperbolic if and only if  $\sigma(S_t) \cap \mathbb{T} = \varnothing$ for some/all $t \ge 0$ and $\mathbb{T} = \{\lambda  \in \mathbb{C} \, : \, |\lambda| = 1\}.$ The structural projection $\mathcal{P}$ is the Riesz projection corresponding to the  operator $S_t$, which is given by 
$$
\mathcal{P} = \frac{1}{2\pi i} \int_{\mathbb{T}} (\lambda I - S_t)^{-1} d\lambda 
$$
for some fixed $t > 0,$ where $I: C_0(\mathbb{R},X) \to C_0(\mathbb{R},X)$ is 
the identity application.

\begin{lemma}\label{SG-hyperb}
Let $U$ be an $h-$evolution family which is $h$--bounded. The  semigroup $\{S_t\}_{t \ge 0}$  is hyperbolic if and  only if  the $h-$semigroup  $\{T_{t_0}\}_{t_0 \ge e_*}$ is $h-$hyperbolic.
\end{lemma}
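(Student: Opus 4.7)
The plan is to exploit the intertwining relation $T_{t_0}=\mathcal{F}^{-1}S_t\mathcal{F}$ from (\ref{SimSem}), observing two key facts. First, the map $\mathcal{F}\colon C_0(\mathbb{R}_*,X)\to C_0(\mathbb{R},X)$, $\mathcal{F}(w)=w\circ\mu^{-1}$, is an \emph{isometric} isomorphism of Banach spaces: since $\mu^{-1}\colon\mathbb{R}\to\mathbb{R}_*$ is a bijection, $\sup_{s\in\mathbb{R}}\|w(\mu^{-1}(s))\|_X=\sup_{t\in\mathbb{R}_*}\|w(t)\|_X$, so $\|\mathcal{F}w\|_\infty=\|w\|_{*\infty}$. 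Second, since $h(e_*)=1$, one has $\mu(e_*)=0$ and $\mu$ maps $[e_*,+\infty)$ bijectively onto $[0,+\infty)$; for $t_0\ge e_*$ and $t:=\mu(t_0)\ge0$, the identity $h(t_0)^{-\nu}=e^{-\nu\mu(t_0)}=e^{-\nu t}$ translates exponential decay into $h$-decay.

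For the forward direction, assume $\{S_t\}_{t\ge 0}$ is hyperbolic with projection $\mathcal{P}$, complement $\mathcal{Q}=I-\mathcal{P}$, and constants $N,\nu$ as in Definition \ref{Def10}. I define
\[
\mathcal{P}_h:=\mathcal{F}^{-1}\mathcal{P}\mathcal{F},\qquad \mathcal{Q}_h:=I-\mathcal{P}_h=\mathcal{F}^{-1}\mathcal{Q}\mathcal{F}.
\]
Since conjugation by an isomorphism preserves idempotency and boundedness, $\mathcal{P}_h$ is a bounded projection on $C_0(\mathbb{R}_*,X)$. The commutation $T_{t_0}\mathcal{P}_h=\mathcal{P}_hT_{t_0}$ follows by conjugating (\ref{commut}) via (\ref{SimSem}). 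The restriction $T_{t_0}|_{\mathcal{Q}_hC_0(\mathbb{R}_*,X)}$ is invertible because $\mathcal{F}$ maps $\mathcal{Q}_hC_0(\mathbb{R}_*,X)$ bijectively onto $\mathcal{Q}C_0(\mathbb{R},X)$, on which $S_t$ is invertible; explicitly its inverse equals $\mathcal{F}^{-1}(S_t|_{\mathcal{Q}C_0(\mathbb{R},X)})^{-1}\mathcal{F}$. The two decay estimates then transfer directly through the isometry:
\[
\|T_{t_0}\mathcal{P}_h w\|_{*\infty}=\|S_t\mathcal{P}\mathcal{F}w\|_\infty\le Ne^{-\nu t}\|\mathcal{F}w\|_\infty=Nh(t_0)^{-\nu}\|w\|_{*\infty},
\]
and the analogous computation for $(T_{t_0}|_{\mathcal{Q}_hC_0(\mathbb{R}_*,X)})^{-1}\mathcal{Q}_h w$.

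The converse direction is entirely symmetric: given an $h$-hyperbolicity projection $\mathcal{P}_h$ on $C_0(\mathbb{R}_*,X)$, set $\mathcal{P}:=\mathcal{F}\mathcal{P}_h\mathcal{F}^{-1}$ and apply the same conjugation, using that every $t\ge 0$ corresponds to a unique $t_0=\mu^{-1}(t)\ge e_*$ with $e^{-\nu t}=h(t_0)^{-\nu}$. I expect no real obstacle here; the argument is essentially a bookkeeping verification that the isometric isomorphism $\mathcal{F}$ simultaneously transports projections, kernel-subspace invertibility, and norm estimates. The only conceptual point worth highlighting is the perfect matching $h(t_0)^{-\nu}=e^{-\nu t}$, which is precisely what makes the exponential decay rate on the classical side translate into the correct $h$-weighted decay on the reparametrized side.
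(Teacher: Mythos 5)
Your proposal is correct and follows essentially the same route as the paper: both define $\mathcal{P}_h=\mathcal{F}^{-1}\mathcal{P}\mathcal{F}$ (resp.\ $\mathcal{P}=\mathcal{F}\mathcal{P}_h\mathcal{F}^{-1}$ for the converse) and transfer commutation, invertibility on the complementary range, and the decay estimates through the conjugacy \eqref{SimSem} together with $h(t_0)^{-\nu}=e^{-\nu\mu(t_0)}$. Your explicit observation that $\mathcal{F}$ is an isometry is in fact the cleanest justification of the norm step that the paper carries out via $\|\mathcal{F}^{-1}\|\,\|\mathcal{F}w\|_\infty\le\|w\|_{*\infty}$.
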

\begin{proof}
 We assume that $\{S_t\}_{t \ge 0}$ is hyperbolic. Let $\mathcal{P}$ the Riesz's projection for $\{S_t\}_{t \ge 0}$ on $C_0(\mathbb{R},X),$ then by using (\ref{SimSem}) we can deduce that
$$
\begin{array}{rl}
\medskip
\mathcal{P} = & \displaystyle \frac{1}{2\pi i} \int_{\mathbb{T}} (\lambda I - S_t)^{-1} d\lambda \\
\medskip
= &\displaystyle \frac{1}{2\pi i} \int_{\mathbb{T}} (\lambda \mathcal{F}\mathcal{F}^{-1} - \mathcal{F} T_{t_0} \mathcal{F}^{-1})^{-1} d\lambda \\
\medskip
= & \displaystyle \frac{1}{2\pi i} \int_{\mathbb{T}} \mathcal{F} (\lambda I_* -  T_{t_0}) ^{-1} \mathcal{F}^{-1} d\lambda  
\medskip 
\end{array}
$$
where $I_*: C_0(\mathbb{R}_*,X) \to C_0(\mathbb{R}_*,X)$ is the identity application and $\mu^{-1}(t)  = t_0 \ge e_*.$ Hence, we define
$$
\mathcal{P}_h =\mathcal{F}^{-1} \mathcal{P} \mathcal{F}.
$$
We claim that $\mathcal{P}_h$ is a projection on $ C_0(\mathbb{R}_*,X).$ In fact, for $\mu^{-1}(t) = t_0 \ge e_*,$
and by using (\ref{SimSem}), (\ref{commut}) combined with the definition of $\mathcal{P}_{h}$ we can see that
$$
\begin{array}{rcl}
\mathcal{P}_h T_{t_0} & = &\mathcal{F}^{-1} \mathcal{P} \mathcal{F} \ \mathcal{F}^{-1} S_t \mathcal{F} \\
&=&\mathcal{F}^{-1}  \mathcal{P}  S_t \mathcal{F}\\
&=& \mathcal{F}^{-1} S_t \mathcal{P} \mathcal{F} \\
&=& \mathcal{F}^{-1} S_t  \mathcal{F} \ \mathcal{F}^{-1} \mathcal{P} \mathcal{F} \\
&=& T_{t_0} \mathcal{P}_h.
\end{array}$$
Now, write $\mathcal{Q}_h = I_* - \mathcal{P}_h$ and $(T_{t_0})_{\mathcal{Q}_h}:\mathcal{Q}_h C_0(\mathbb{R}_*,X) \to\mathcal{Q}_h C_0(\mathbb{R}_*,X)$ is invertible with $(T_{t_0})^{-1}_{\mathcal{Q}_h} (\mathcal{Q}_h w)= \mathcal{F}^{-1} S_t^{-1}(\mathcal{Q} F w),\mathcal{Q} = I - \mathcal{P}$ and $\mathcal{F}\mathcal{Q}_h = \mathcal{Q} \mathcal{F}.$

\medskip 
\noindent Remembering that $t = \mu(t_0) = \ln(h(t_0)), \, t_0 \ge e_*, $ from the hypothesis (cf. Definition \ref{Def10}) we have that 
$$
\begin{array}{rl}
\medskip
||T_{t_0}\mathcal{P}_h w||_{*\infty} &  \displaystyle = ||\mathcal{F}^{-1} S_t \mathcal{P} \mathcal{F} w||_{*\infty} \\
\medskip
& \le ||\mathcal{F}^{-1}|| \, || S_t \mathcal{P} \mathcal{F} w||_\infty \\
\medskip
& \le N e^{-\nu t}||\mathcal{F}^{-1}|| \,  ||\mathcal{F} w||_\infty \\
\medskip
& \le N e^{-\nu t} \,  ||w||_{*\infty} \\
\medskip 
& =  N e^{-\nu \mu(t_0)} \,  ||w||_{*\infty}  = N h(t_0)^{-\nu} \,  ||w||_{*\infty},
\end{array}$$
and 
$$
\begin{array}{rl}
\medskip
||(T_{t_0})^{-1}_{\mathcal{Q}_h} \, \mathcal{Q}_h w||_{*\infty} &  \displaystyle = ||\mathcal{F}^{-1} S_t^{-1} \mathcal{Q} \mathcal{F}w||_{*\infty} \\
\medskip
& \le ||\mathcal{F}^{-1}|| \, || (S_t)_{\mathcal{Q}}^{-1} \mathcal{Q} \mathcal{F}w||_\infty \\
\medskip
& \le N e^{-\nu t}||\mathcal{F}^{-1}|| \,  ||\mathcal{F}w||_\infty \\
\medskip 
& \le  N e^{-\nu \mu(t_0)} \,  ||w||_{*\infty}  = N h(t_0)^{-\nu} \,  ||w||_{*\infty}.
\end{array}$$

\medskip 
Reciprocally, we suppose that $\{T_{t_0}\}_{t_0 \ge e_*}$ is $h-$hyperbolic and let $\mathcal{P}_h$ its projection. We define $\mathcal{P} = \mathcal{F}  \mathcal{P}_h \mathcal{F}^{-1}$ and write  
$\mathcal{Q} = I - \mathcal{P}.$ Moreover, $S_t^{-1}(\mathcal{Q}v ) = \mathcal{F} T_{t_0} (\mathcal{Q}_h \mathcal{F}^{-1}v),$ for $v \in C_0(\mathbb{R},X).$ 
Similarly to the previous estimation, we conclude the result.

\end{proof}

In order to relate the spectra of $\{T_{t_0}\}_{t_0 \ge e_*}$ and $B_h$ to the hyperbolicity of the $h-$evolution family $\{U(t_0,s_0)\}_{t_0\ge s_0}$ we need some preliminary results. 
For the semigroup case, we have the following proposition, see \cite[Lemma 9.16]{EN}.

\begin{proposition}
 Let $\{S_t\}_{t\ge 0}$ be a hyperbolic evolution semigroup on $C_0(\mathbb{R},X)$ with corresponding projection  $\mathcal{P}$. Then $\varphi \mathcal{P} f = \mathcal{P}(\varphi f)$ for any $f \in C_0(\mathbb{R},X)$ where  $\varphi \in C_b(\mathbb{R})$, the space of all bounded continuous real-valued functions.
 \end{proposition}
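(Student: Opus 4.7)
The plan is to reduce the identity $\varphi \mathcal{P} f = \mathcal{P}(\varphi f)$ to showing that the bounded multiplication operator $M_\varphi \colon g \mapsto \varphi g$ on $C_0(\mathbb{R},X)$ leaves both $\operatorname{Range}(\mathcal{P})$ and $\operatorname{Range}(\mathcal{Q})$ invariant, where $\mathcal{Q}:=I-\mathcal{P}$. Granted this, decomposing $f=\mathcal{P} f+\mathcal{Q} f$ gives
\[
\mathcal{P}(\varphi f)=\mathcal{P}(\varphi\mathcal{P} f)+\mathcal{P}(\varphi\mathcal{Q} f)=\varphi\mathcal{P} f+0,
\]
since $\mathcal{P}$ acts as the identity on $\operatorname{Range}(\mathcal{P})$ and as zero on $\operatorname{Range}(\mathcal{Q})$.

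The central tool I would use is the intertwining identity
\[
S_t M_\varphi=M_{\tau_t\varphi}\,S_t,\qquad (\tau_t\varphi)(s):=\varphi(s-t),
\]
which is a one-line consequence of the evolution-semigroup formula $(S_t g)(s)=V(s,s-t)g(s-t)$. Although $M_\varphi$ does not commute with $S_t$ individually, this intertwining (together with the hyperbolicity of Definition~\ref{Def10}) is exactly the mechanism that links $M_\varphi$ to $\mathcal{P}$.

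For the stable invariance I would establish the characterization $\operatorname{Range}(\mathcal{P})=\{g\in C_0(\mathbb{R},X):\|S_t g\|_\infty\to 0\}$. The inclusion $\subseteq$ is immediate from $\|S_t\mathcal{P}\|\le Ne^{-\nu t}$; for $\supseteq$ one writes $g=\mathcal{P} g+\mathcal{Q} g$ and combines the lower bound $\|S_t\mathcal{Q} g\|_\infty\ge N^{-1}e^{\nu t}\|\mathcal{Q} g\|_\infty$ (obtained by applying $(S_t|_{\mathcal{Q} C_0})^{-1}$) with $\|S_t\mathcal{Q} g\|_\infty\le\|\mathcal{Q}\|\,\|S_t g\|_\infty\to 0$ to force $\mathcal{Q} g=0$. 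Invariance of this set under $M_\varphi$ follows at once from the pointwise estimate $\|(S_t M_\varphi g)(s)\|_X\le\|\varphi\|_\infty\|(S_t g)(s)\|_X$. For the unstable invariance I would characterize $\operatorname{Range}(\mathcal{Q})$ as the set of $g\in C_0(\mathbb{R},X)$ admitting a backward orbit $\{g_{-t}\}_{t\ge 0}\subset C_0(\mathbb{R},X)$ with $S_t g_{-t}=g$ and $\|g_{-t}\|_\infty\to 0$; the forward direction uses $g_{-t}=(S_t|_{\mathcal{Q} C_0})^{-1}g$, and the converse follows from $\|\mathcal{P} g\|_\infty=\|S_t\mathcal{P} g_{-t}\|_\infty\le Ne^{-\nu t}\|\mathcal{P}\|\,\|g_{-t}\|_\infty\to 0$. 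Applying the intertwining in the form $S_t M_{\tau_{-t}\varphi}=M_\varphi S_t$ then transports a backward orbit $\{f_{-t}\}$ of $f$ into a backward orbit $\{(\tau_{-t}\varphi)f_{-t}\}$ of $\varphi f$; the transported terms remain in $C_0(\mathbb{R},X)$ because $C_0$ is an ideal in $C_b$, and their norms are bounded by $\|\varphi\|_\infty\|f_{-t}\|_\infty\to 0$.

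The main subtlety I anticipate is obtaining a clean, dichotomy-independent backward-orbit characterization of $\operatorname{Range}(\mathcal{Q})$: the stable half of the argument reduces to a sup-norm estimate, while the unstable half requires combining the invertibility of $S_t$ on $\operatorname{Range}(\mathcal{Q})$ with the intertwining relation to manufacture a backward orbit for $\varphi f$ without circularly assuming that $\varphi f$ already lies in $\operatorname{Range}(\mathcal{Q})$.
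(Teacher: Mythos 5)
Your argument is correct and complete. Note, however, that the paper does not actually prove this proposition: it is quoted from the literature (Engel--Nagel, Lemma VI.9.16), so there is no in-paper proof to compare against. What you have written is essentially a self-contained reconstruction of the standard argument from that source: characterize $\operatorname{Range}(\mathcal{P})$ dynamically as $\{g:\|S_tg\|_\infty\to0\}$ and $\operatorname{Range}(\mathcal{Q})=\ker\mathcal{P}$ via decaying backward orbits, verify that both characterizations are stable under the multiplication operator $M_\varphi$ using the intertwining $S_tM_\varphi=M_{\tau_t\varphi}S_t$, and then read off $\mathcal{P}(\varphi f)=\varphi\mathcal{P}f$ from the direct sum decomposition. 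All the individual estimates check out against Definition~\ref{Def10}: the lower bound $\|S_t\mathcal{Q}g\|_\infty\ge N^{-1}e^{\nu t}\|\mathcal{Q}g\|_\infty$ follows correctly from applying the bound on $(S_t|_{\mathcal{Q}})^{-1}$ to $S_t\mathcal{Q}g$, and the converse inclusion for $\operatorname{Range}(\mathcal{Q})$ only needs $\|g_{-t}\|_\infty$ bounded (your stronger decay hypothesis is harmless). The one point worth making explicit in a final write-up is that your ``backward orbit'' need not satisfy any consistency relation $S_\tau g_{-t}=g_{-(t-\tau)}$ --- only the identity $S_tg_{-t}=g$ for each $t$ is used --- which removes the subtlety you flag at the end: the transported family $(\tau_{-t}\varphi)f_{-t}$ is manufactured directly from the intertwining relation and lands in $C_0(\mathbb{R},X)$ because $C_0$ is a $C_b$-module, with no circularity.
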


\medskip

An important consequence of the above result is that 
$$
\mathcal{P} v(s) = P(s) v(s), \quad v \in C_0(\mathbb{R},X), \ s \in \mathbb{R},
$$
for some bounded, strongly continuous projection-valued function $\mathcal{P}: \mathbb{R}  \to \mathcal{B}(X)$ see \cite[Proposition 9.13]{EN}.

\medskip

The following definition \cite[Def. 1.1]{LRS} establishes the classical property of uniform exponential dichotomy of an evolution family.

\begin{definition}\label{DicExp}
The evolution family  $\{V(t,s)\}_{t \ge s }$ is said to admits an exponential dichotomy if 
\begin{itemize}

\item[(a)] there exist projections $P(t): X \to X, \, t \in \mathbb{R},$ and write $Q(t) = I - P(t)$ with $P(t) V(t,s) = V(t,s) P(s)$ and the restriction $V_Q(t,s): Q(s)X \to Q(t)X$ is invertible, for all $t \ge s$;

\item[(b)] there exist constants $\nu > 0, \,  N\ge 1$ such that 
$$
||V(t,s)P(s)|| \le N e^{-\nu (t - s)} \quad \mbox{and} \quad ||V_Q(t,s)^{-1}Q(t)|| \le N e^{-\nu (t -s)}, \ \ t \ge s.
$$
\end{itemize}
\medskip

\end{definition}

Notice that $||P(t)|| \le N$ for every $t \in \mathbb{R}.$ Furthermore, as in \cite[Lemma 4.2]{VRS}, one may prove that the mapping $t \to P(t)$ is strongly continuous and thus $P(\cdot) \in C_b(\mathbb{R}, \mathcal{B}_s(X)),$ the space of all bounded and continuous functions from $\mathbb{R}$ with values in $\mathcal{B}(X)$ endowed with the topology of strong convergence.

\medskip
 The hyperbolicity of the evolution semigroup characterizes the exponential dichotomy of the underlying evolution family, and thus the evolution  semigroups method provides a strong tool to study the exponential dichotomy of evolution families. More precisely, the result is stated in the following proposition (see, for instance, \cite[Theorem 3.17, Theorem 4.25]{CL} or \cite[Theorem VI.9.18]{EN}).

 \begin{proposition}\label{DicotV}
   Let $V$ be an exponentially bounded evolution family on a Banach space $X$, let $\{S_t\}_{t \ge 0 }$ be the associated evolution semigroup on  $C_0(\mathbb{R},X)$ and $A$ its generator. The following assertions are equivalent:

\begin{enumerate}
    \item $V$ admits an exponential dichotomy on $X$;

\item $\{ S_t\}_{t \ge 0 }$ is hyperbolic on $C_0(\mathbb{R},X)$;

\item  $\sigma(A) \cap i\mathbb{R} = \varnothing.$ In this case, $A$ is invertible and its inverse is given by 
$$
(A^{-1}f)(t) = - \int_{\mathbb{R}} \Gamma(t,s) f(s) ds, \quad \forall f \in C_0(\mathbb{R},X), \ \ t \in \mathbb{R},
$$
where 
$$
\Gamma(t,s) = \left\{ 
\begin{array}{cl}
\medskip
   V(t,s)P(s),  &  t > s, \\
    - V_Q(t,s)^{-1} Q(s) , & t < s.
\end{array}
\right.
$$

\end{enumerate}
   
   \end{proposition}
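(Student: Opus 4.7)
The plan is to prove the chain of implications (1) $\Rightarrow$ (2) $\Rightarrow$ (3) $\Rightarrow$ (1), exploiting the fact that the evolution semigroup $\{S_t\}_{t\ge 0}$ acts pointwise through the evolution family $V$. First, I would establish (1) $\Rightarrow$ (2): given the dichotomy projections $P(t)$ from Definition \ref{DicExp}, I would define a multiplication operator $\mathcal{P}\colon C_0(\mathbb R,X)\to C_0(\mathbb R,X)$ by $(\mathcal{P}f)(s):=P(s)f(s)$; the strong continuity and uniform boundedness $\|P(s)\|\le N$ guarantee that $\mathcal{P}$ is a bounded projection on $C_0(\mathbb R,X)$. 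The intertwining $P(t)V(t,s)=V(t,s)P(s)$ immediately yields $S_t\mathcal{P}=\mathcal{P}S_t$, and the exponential estimates on $V(t,s)P(s)$ and $V_Q(t,s)^{-1}Q(t)$ transfer pointwise to $\|S_t\mathcal{P}\|$ and $\|(S_t|_{\mathcal{Q}C_0})^{-1}\mathcal{Q}\|$ with the same constants $N$ and $\nu$, giving hyperbolicity.

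For (2) $\Rightarrow$ (3) I would invoke the spectral mapping theorem for evolution semigroups (a stronger statement than the general $C_0$-case), which asserts $\sigma(S_t)\setminus\{0\}=e^{t\sigma(A)}$ for every $t>0$. Hyperbolicity means $\sigma(S_t)\cap\mathbb T=\varnothing$, and through the spectral mapping theorem this is equivalent to $\sigma(A)\cap i\mathbb R=\varnothing$. For the integral formula, once $0\in\rho(A)$ is known, I would construct $A^{-1}$ using the splitting induced by $\mathcal{P}$: on the stable part, $A^{-1}\mathcal{P}f=-\int_0^\infty S_t\mathcal{P}f\,dt$, while on the unstable part $A^{-1}\mathcal{Q}f=\int_0^\infty (S_t|_{\mathcal{Q}C_0})^{-1}\mathcal{Q}f\,dt$. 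Evaluating these Bochner integrals pointwise at $s\in\mathbb R$ via the definition $(S_tf)(s)=V(s,s-t)f(s-t)$ and changing variables $\tau=s-t$ (resp.\ $\tau=s+t$) produces exactly the advertised Green's kernel $\Gamma(t,s)$.

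The implication (3) $\Rightarrow$ (1) is the most delicate step and is where I would expect to spend the most effort. Starting from $\sigma(A)\cap i\mathbb R=\varnothing$, so that $A$ is invertible, I would use the Riesz projection $\mathcal{P}:=\tfrac{1}{2\pi i}\int_{\mathbb T}(\lambda I-S_1)^{-1}d\lambda$ associated to the unit circle and show via the multiplication-operator structure of evolution semigroups (compare with the result cited before Definition \ref{DicExp}) that $\mathcal{P}$ is the pointwise multiplication by some strongly continuous projection-valued function $P(\cdot)$. The intertwining property $P(t)V(t,s)=V(t,s)P(s)$ then follows from $\mathcal{P}S_t=S_t\mathcal{P}$, and the exponential estimates of Definition \ref{DicExp} are recovered from the hyperbolic estimates by evaluating at pointmass-type test functions; a standard density argument gives uniform bounds.

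The principal obstacle will be the evolution-semigroup spectral mapping theorem needed in (2) $\Rightarrow$ (3) together with the rigorous extraction of the pointwise projections $P(s)$ from the abstract Riesz projection $\mathcal{P}$ in (3) $\Rightarrow$ (1); both require the specific multiplicative structure of $\{S_t\}_{t\ge 0}$ and cannot be obtained from generic $C_0$-semigroup theory. Since the proposition is cited from \cite{CL} and \cite{EN}, one could alternatively quote these results directly; however, the plan above reconstructs the argument in a self-contained way, keeping the construction of the Green kernel $\Gamma(t,s)$ explicit enough to be reused later in the $h$-framework via the conjugacy $T_{t_0}=\mathcal{F}^{-1}S_t\mathcal{F}$ together with Lemma \ref{espectros=}.
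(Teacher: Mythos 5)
Your outline is correct, but you should be aware that the paper does not prove Proposition~\ref{DicotV} at all: it is imported verbatim as a known classical result, with the text immediately preceding it pointing to \cite[Th.~3.17, Th.~4.25]{CL} and \cite[Th.~VI.9.18]{EN}, and the proposition is then used as a black box in the proof of Theorem~\ref{h-dichotomy} (via Lemmas~\ref{VexpAcot}, \ref{espectros=} and \ref{SG-hyperb}). What you have written is a faithful reconstruction of the argument in those references: the multiplication-operator projection $(\mathcal{P}f)(s)=P(s)f(s)$ for (1)$\Rightarrow$(2) (the boundedness $\|P(s)\|\le N$ and strong continuity of $s\mapsto P(s)$, which the paper records after Definition~\ref{DicExp} citing \cite[Lemma 4.2]{VRS}, are exactly what make $\mathcal{P}$ a bounded projection on $C_0(\mathbb R,X)$); the spectral mapping theorem for evolution semigroups for the equivalence of $\sigma(S_t)\cap\mathbb T=\varnothing$ with $\sigma(A)\cap i\mathbb R=\varnothing$; the Laplace-transform formulas $A^{-1}\mathcal{P}=-\int_0^\infty S_t\mathcal{P}\,dt$ and $A^{-1}\mathcal{Q}=\int_0^\infty (S_t|_{\mathcal{Q}C_0})^{-1}\mathcal{Q}\,dt$, which upon the change of variables you describe do produce the kernel $\Gamma(t,s)$ (with the standard convention that $V_Q(t,s)^{-1}$ for $t<s$ denotes the inverse of $V_Q(s,t):Q(t)X\to Q(s)X$); and the extraction of pointwise projections from the Riesz projection via the multiplication-operator structure, which is precisely \cite[Prop.~9.13]{EN} quoted just before Definition~\ref{DicExp}. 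The only caveat is that the two ingredients you flag as ``the principal obstacle'' --- the evolution-semigroup spectral mapping theorem and the fact that the Riesz projection of a hyperbolic evolution semigroup acts by multiplication --- constitute essentially the entire content of the cited theorems, so your plan is not genuinely more self-contained than the citation the paper uses; as a proof sketch, however, every step you describe does go through.
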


\medskip
 When dealing with $h-$evolution families, we will introduce the property of $h-$dichotomy as follows.

\begin{definition}\label{h-dich}
The $h-$evolution family  $\{U(t_0,s_0)\}_{t_0 \ge s_0 }$ is said to admits an  $h-$dichotomy if 
\begin{itemize}

\item[(a)] there exist projections $P_h(t_0): X \to X, \, t_0 \in \mathbb{R}_*,$ with 
$$P_h(t_0) U(t_0,s_0) = U(t_0,s_0) P_h(s_0),$$ 
write $Q_h(t_0) = I - P_h(t_0),$ and the restriction $U_{Q_h}(t_0,s_0): Q_h(s_0)X \to Q_h(t_0)X$ is invertible, for all $t_0 \ge s_0$;

\item[(b)] there exist constants $\nu > 0, \,  N\ge 1$ such that 
$$
||U(t_0,s_0)P_h(s_0)|| \le N \left(\frac{h(t_0)}{h(s_0)}\right)^{-\nu} \mbox{and} \ \ ||U_{Q_h}(t_0,s_0)^{-1}Q_h(t_0)|| \le N \left(\frac{h(t_0)}{h(s_0)}\right)^{-\nu},
$$
for $t_0 \ge s_0.$
\end{itemize}
\medskip

\end{definition}

We also have to $||P_h(t_0)|| \le N$ for every $t_0 \in \mathbb{R}_*.$ Following similar ideas to \cite[Lemma 4.2]{VRS}, one may prove that the mapping $t_0 \to P_h(t_0)$ is strongly continuous and thus $P(\cdot) \in C_b(\mathbb{R}_*, \mathcal{B}_{s}(X)),$ the space of all bounded and continuous functions from $\mathbb{R}_*$ with values in $\mathcal{B}(X)$ endowed with the topology of strong convergence:
$$
||P_h(t_0)||:= \sup\{ ||P_h(t_0)x||_X \, : \, ||x||_X \le 1\}.
$$

The main result of this article extends the equivalences from Proposition \ref{DicotV} to the framework
of $h$--evolution families.

\begin{theorem}\label{h-dichotomy}
   Let $U$ be an  $h-$evolution family which is $h-$bounded on a Banach space $X$, let $\{T_{t_0}\}_{t_0 \ge e_* }$ be the associated evolution $h-$semigroup on  $C_0(\mathbb{R}_*,X)$ and $B_h$ its generator. The following assertions are equivalent:

\medskip

\begin{enumerate}

\item $\{ T_{t_0}\}_{t_0 \ge e_* }$ is $h-$hyperbolic on $C_0(\mathbb{R}_*,X)$;

\item $U$ admits an $h-$dichotomy on $X$;

\item  $\sigma(B_h) \cap i\mathbb{R} = \varnothing.$ In this case, $B_h$ is invertible and, if $h$ is continuously differentiable then  its inverse is given by 
\begin{equation}\label{InversaB_h}
(B_h^{-1}g)(t_0) = - \int_{\mathbb{R}} \Gamma_h(t_0,s_0) g(s_0) d\mu_*, \quad \forall g \in C_0(\mathbb{R}_*,X), \ \ t_0 \in \mathbb{R}_*,
\end{equation}
where 
$$
\Gamma_h(t_0,s_0) = \left\{ 
\begin{array}{cl}
\medskip
   U(t_0,s_0)P_h(s_0),  &  t_0 > s_0, \\
    - U_{Q_h}(t_0,s_0)^{-1} Q_h(s_0) , & t_0 < s_0.
\end{array}
\right.
$$

\end{enumerate}
   
   \end{theorem}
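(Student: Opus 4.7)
The plan is to reduce the three-way equivalence to the classical exponential dichotomy theory via the conjugacy $\mathcal{F}$ established in Theorem~\ref{AgenB}, namely $T_{t_0}=\mathcal{F}^{-1}S_t\mathcal{F}$ and $B_h=\mathcal{F}^{-1}A\mathcal{F}$ with $t=\mu(t_0)$. This reduction will deliver the equivalences $(1)\Leftrightarrow(3)$ almost for free from what has already been proved, while the substantive content will be in $(1)\Leftrightarrow(2)$ and the explicit formula for $B_h^{-1}$.

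I will dispose of $(1)\Leftrightarrow(3)$ first. By Lemma~\ref{SG-hyperb}, $h$-hyperbolicity of $\{T_{t_0}\}_{t_0\ge e_*}$ on $C_0(\mathbb R_*,X)$ is equivalent to hyperbolicity of $\{S_t\}_{t\ge 0}$ on $C_0(\mathbb R,X)$. By the classical result recorded in Proposition~\ref{DicotV}, the latter is equivalent to $\sigma(A)\cap i\mathbb R=\varnothing$, and by Lemma~\ref{espectros=} we have $\sigma(A)=\sigma(B_h)$, which closes the loop.

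The bulk of the proof is the equivalence $(1)\Leftrightarrow(2)$, which I will attack by showing directly that $U$ admits an $h$-dichotomy if and only if $V$ admits an exponential dichotomy in the sense of Definition~\ref{DicExp}, and then invoking Proposition~\ref{DicotV} together with Lemma~\ref{SG-hyperb}. Given dichotomy projections $P(t)$ for $V$, I will set $P_h(t_0):=P(\mu(t_0))$; then the intertwining $P(t)V(t,s)=V(t,s)P(s)$ translates, via $V(\mu(t_0),\mu(s_0))=U(t_0,s_0)$, into $P_h(t_0)U(t_0,s_0)=U(t_0,s_0)P_h(s_0)$, and the estimate
\begin{equation*}
\|U(t_0,s_0)P_h(s_0)\|=\|V(\mu(t_0),\mu(s_0))P(\mu(s_0))\|\le N e^{-\nu(\mu(t_0)-\mu(s_0))}=N\left(\tfrac{h(t_0)}{h(s_0)}\right)^{-\nu},
\end{equation*}
follows from the identity $\mu(t_0)-\mu(s_0)=\ln(h(t_0)/h(s_0))$. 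The unstable part is handled identically using $V_Q(\mu(t_0),\mu(s_0))^{-1}=U_{Q_h}(t_0,s_0)^{-1}$. The converse direction uses $P(t):=P_h(\mu^{-1}(t))$ and runs symmetrically. Verifying the strong continuity of $t_0\mapsto P_h(t_0)$ is inherited from that of $t\mapsto P(t)$ via continuity of $\mu$.

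For the explicit formula \eqref{InversaB_h}, I will use $B_h^{-1}=\mathcal{F}^{-1}A^{-1}\mathcal{F}$ and apply the Proposition~\ref{DicotV} formula to $f=g\circ\mu^{-1}$. This gives, for $t_0\in\mathbb R_*$,
\begin{equation*}
(B_h^{-1}g)(t_0)=(A^{-1}(g\circ\mu^{-1}))(\mu(t_0))=-\int_{\mathbb R}\Gamma(\mu(t_0),s)\,g(\mu^{-1}(s))\,ds.
\end{equation*}
The change of variables $s=\mu(s_0)$ yields $ds=\mu'(s_0)\,dm(s_0)=\tfrac{h'(s_0)}{h(s_0)}dm(s_0)=d\mu_*(s_0)$, and the identification of $\Gamma(\mu(t_0),\mu(s_0))$ with $\Gamma_h(t_0,s_0)$ in each of the cases $t_0\gtrless s_0$ (which is ensured by monotonicity of $\mu$ and the above identifications of $V$ and $U_{Q_h}^{-1}$ with their $h$-counterparts) delivers \eqref{InversaB_h}. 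The main technical obstacle I expect is the bookkeeping for this change of variables and ensuring that the absolutely continuous measure $\mu_*$ on $\mathbb R_*$ is precisely the pushforward of Lebesgue measure under $\mu^{-1}$; once this is recognized the formula falls out cleanly, which is why the hypothesis that $h$ is continuously differentiable is imposed.
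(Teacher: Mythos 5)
Your proposal is correct and follows essentially the same route as the paper: both reduce everything to the classical Proposition~\ref{DicotV} via Lemma~\ref{SG-hyperb} and Lemma~\ref{espectros=}, transport the dichotomy projections through $P_h=P\circ\mu$ (and back via $\mu^{-1}$), and obtain \eqref{InversaB_h} by the substitution $s=\mu(s_0)$, $ds=\tfrac{h'(s_0)}{h(s_0)}\,dm(s_0)=d\mu_*(s_0)$. The only difference is organizational (you prove $(1)\Leftrightarrow(3)$ and $(1)\Leftrightarrow(2)$ separately, the paper runs the cycle $(1)\Rightarrow(2)\Rightarrow(3)\Rightarrow(1)$), and your Jacobian $\tfrac{h'(s_0)}{h(s_0)}$ is in fact written more carefully than the paper's, which has a typo with $t_0$ in place of $s_0$.
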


\begin{proof}   Since $U$ is  an $h$--bounded $h-$evolution family on $X$, from Lemma \ref{VexpAcot} we have that $V$ given by (\ref{DefV}) is an evolution family exponentially bounded.

\medskip

 (1)$\Rightarrow (2).$  Due to  $\{ T_{t_0}\}_{t_0 \ge e_* }$ is $h-$hyperbolic on $C_0(\mathbb{R}_*,X)$ from Lemma \ref{SG-hyperb} we obtain that $\{S_t\}_{t \ge 0}$ is a semigroup hyperbolic on $C_0(\mathbb{R},X)$. Furthermore, by Proposition \ref{DicotV} we deduce that $V$ admits an exponential dichotomy on $X$.

   Let $P(t), \, t \in \mathbb{R},$ the  associated projection to the dichotomy of $V.$ We define 
   $$
   P_h(t_0) = (P \circ \mu)(t_0) = P(\mu(t_0)), 
   $$
and we claim that $P_h(t_0), \, t_0 \in \mathbb{R}_*$ is a projection associated to  $U$. In fact,  is clear that $P_h(t_0): X \to X$ is well defined, and
\medskip
$$
P_h(t_0) U(t_0, s_0) =  P(t) U(\mu^{-1}(t), \mu^{-1}(s)) = P(t) V(t,s) = V(t,s) P(s) =  U(t_0, s_0) P_h(s_0) $$
since $\mu(t_0) = t.$

\medskip

Note that  $Q_h(t_0) = I - P_h(t_0) = I - P(\mu(t_0)) = Q(\mu(t_0)),$ for $t_0 \ge s_0,$ we have
\medskip
$$U_{Q_h}(t_0,s_0)^{-1}Q_h(t_0) = U_{Q_h}(\mu^{-1}(t), \mu^{-1}(s))^{-1} Q(t) = V_Q(t,s)^{-1}Q(t), 
$$
due to $\mu(t_0) = t.$

Hence, considering that $\mu(t_0) = \ln(h(t_0)) = t$ combined with the property of dichotomy exponential of $V,$ there exist constants $\nu > 0, \,  N\ge 1$ such that
$$
||U(t_0,s_0)P_h(s_0)|| = ||V(t,s)P(s)|| \le N e^{-\nu (t-s)}
= N \left(\frac{h(t_0)}{h(s_0)}\right)^{-\nu} 
$$
and
$$ \ \ ||U_{Q_h}(t_0,s_0)^{-1}Q_h(t_0)|| = ||V_Q(t,s)^{-1}Q(t)|| \le N e^{-\nu (t -s)} =  N \left(\frac{h(t_0)}{h(s_0)}\right)^{-\nu},
$$
for $t_0 \ge s_0.$ So, we conclude that $U$ admits an $h-$dichotomy.

\medskip

(2) $\Rightarrow$ (3). Let $P_h(t_0)$ be the projection associated with $U.$ We write 
$$
P(t) = P_h(\mu^{-1}(t)).
$$
Similarly to the previous demonstration, we see that $P(t)$ is a projection that verify (a) and (b) in Definition \ref{DicExp}, hence $V$ admits an exponential dichotomy. From Proposition \ref{DicotV} we have $\sigma(A) \cap i\mathbb{R} = \varnothing.$  Using Lemma \ref{espectros=} we conclude that $\sigma(B_h) \cap i\mathbb{R} = \varnothing.$  It remains to prove the formula (\ref{InversaB_h}). For this pick $g \in C_{0}(\mathbb{R}_*, X)$ and set 
$$
w(t_0) = - \int_{\mathbb{R}_*} \Gamma_h(t_0,s_0) g(s_0) d\mu_*, \quad t_0 \in \mathbb{R}_*.
$$
It is clear that $g \circ \mu^{-1} \in C_{0}(\mathbb{R}, X).$ By Proposition \ref{DicotV} and, considering $\mu(t_0) = t \in \mathbb{R},$ $ \,  t_0 \in \mathbb{R}_*$ we get

$
\begin{array}{rl}
\medskip
A^{-1}(g \circ \mu^{-1})(\mu(t_0)) = A^{-1}(g \circ \mu^{-1})(t) & = \displaystyle- \int_{\mathbb{R}} \Gamma(t,s) (g \circ \mu^{-1})(s) ds\\
\end{array}
$
$$
\begin{array}{rl}
     \medskip
     & =  \displaystyle- \left[ \int_{-\infty}^t \Gamma(t,s)(g \circ \mu^{-1})(s) ds + \int_{t}^{+\infty} \Gamma(t,s)(g \circ \mu^{-1})(s) ds \right]\\
     \medskip
& =  \displaystyle- \left[ \int_{-\infty}^t V(t,s)P(s)(g \circ \mu^{-1})(s) ds - \int_{t}^{+\infty} V_Q(t,s)^{-1} Q(s) (g \circ \mu^{-1})(s) ds \right]\\
\medskip
& =  \displaystyle- \int_{-\infty}^t U(\mu^{-1}(t),\mu^{-1}(s))P(\mu(\mu^{-1}(s)) g(\mu^{-1}(s)) \, ds \\
\medskip
& \quad \displaystyle + \int_{t}^{+\infty} U_Q(\mu^{-1}(t),\mu^{-1}(s))^{-1} Q(\mu(\mu^{-1}(s)) g(\mu^{-1}(s)) ds \\
\medskip
& =  \displaystyle- \int_{-\infty}^{t_0} U(t_0,s_0)P(\mu(s_0)) g(s_0) \frac{h'(t_0)}{h(t_0)}\, dm(s_0) \\
\medskip
& \quad \displaystyle + \int_{t_0}^{+\infty} U_Q(t_0,s_0)^{-1} Q(\mu(s_0)) g(s_0)\frac{h'(t_0)}{h(t_0)}\, dm(s_0) \\
\medskip
& =  \displaystyle- \int_{-\infty}^{t_0} U(t_0,s_0)P_h(s_0) g(s_0) d\mu_*  
\displaystyle - \int_{t_0}^{+\infty} U_Q(t_0,s_0)^{-1} Q_h(s_0) g(s_0)\, d\mu_* \\
\medskip
& = \displaystyle - \int_{\mathbb{R}_*} \Gamma_h(t_0,s_0) g(s_0) d\mu_* = w(t_0).
     \end{array}
$$
Hence $A^{-1}(g \circ \mu^{-1}) \circ \mu = w$. Thus $w \circ \mu^{-1} \in D(A)$ and consequently $w \in D(B_h).$ From (\ref{GenerSem}) we have that $B_h^{-1}(g) = w$ which proves the desired formula.  

\medskip

(3) $\Rightarrow$ (1). Using the hypothesis we have $\sigma(A) \cap i\mathbb{R} = \varnothing,$ the Proposition \ref{DicotV} implies $\{ S_t\}_{t \ge 0 }$ is hyperbolic on $C_0(\mathbb{R},X)$ and from  Lemma \ref{SG-hyperb} we obtain that 
$\{ T_{t_0}\}_{t_0 \ge e_* }$ is $h-$hyperbolic on $C_0(\mathbb{R}_*,X).$

\end{proof}

We now return to the growth rate introduced in Example~\ref{ex 1}, namely
$h(t)=e^{(t-2)^3}$. In that earlier example we only computed the basic objects
induced by $h$, such as the neutral element $e_*$, the inverse $t^{*-1}$ and the
corresponding group operation $*$. In what follows: we construct the associated $h$-evolution family,
describe its $h$-dichotomy, construct the corresponding $h$-semigroup on 
$C_0(\mathbb{R}_*,X)$ and identify its infinitesimal generator and resolvent.
In this way, the cubic example $h(t)=e^{(t-2)^3}$ serves as a concrete model that 
ties together all the general results obtained in the manuscript.

\begin{example}
    Let $h:\mathbb R\to \mathbb{R}^+$ be given by $
h(t)=e^{(t-2)^3}.$ 
Then
\[
h^{-1}(\tau)=2+\sqrt[3]{\ln \tau},\qquad e_*:=h^{-1}(1)=2,
\]
and the inverse of $t$ in $(\mathbb R,*,\odot)$ is
\[
t^{*-1}=h^{-1}\!\Big(\frac{1}{h(t)}\Big)
=2+\sqrt[3]{-(t-2)^3}=4-t.
\]

Let $\{U(t_0,s_0)\}_{t_0\ge s_0}$ be an $h$–evolution family on $X$ satisfying the $h$–bound
\[
\|U(t_0,s_0)\|
\;\le\;K\Big(\frac{h(t_0)}{h(s_0)}\Big)^{\alpha}
=K\exp\!\big(\alpha\big[(t_0-2)^3-(s_0-2)^3\big]\big),
\qquad t_0\ge s_0.
\]

By Definition \ref{h-dich}, we say that $U$ admits an $h$–dichotomy, that is, there exist projections $P_h(t_0):X\to X$ and constants $N\ge1$, $\nu>0$ such that
\begin{align*}
P_h(t_0)U(t_0,s_0)&=U(t_0,s_0)P_h(s_0),\qquad t_0\ge s_0,\\[1mm]
\|U(t_0,s_0)P_h(s_0)\|
&\le N\exp\!\big(-\nu\big[(t_0-2)^3-(s_0-2)^3\big]\big),\\[1mm]
\|U_{Q_h}(t_0,s_0)^{-1}Q_h(t_0)\|
&\le N\exp\!\big(-\nu\big[(t_0-2)^3-(s_0-2)^3\big]\big),
\end{align*}
where $Q_h(t_0)=I-P_h(t_0)$ and $U_{Q_h}(t_0,s_0)$ is the restriction 
$U(t_0,s_0)\big|_{Q_h(s_0)X}:Q_h(s_0)X\to Q_h(t_0)X$.

  From Proposition \ref{C0semigroup}, the evolution $h$–semigroup $\{T_{t_0}\}_{t_0\ge e_*}$ on
$C_0(\mathbb R_*,X)$ is 
\[(T_{t_0}u)(s_0)
=U\Big(s_0,\;2+\sqrt[3]{(s_0-2)^3-(t_0-2)^3}\Big)\,
u\Big(2+\sqrt[3]{(s_0-2)^3-(t_0-2)^3}\Big),
\]
and its generator $B_h$ (Definition \ref{generadorB}) becomes
\[
B_h w
=\lim_{t_0\to 2^+}
\frac{T_{t_0}w-w}{\ln(h(t_0))}
=\lim_{t_0\to 2^+}
\frac{T_{t_0}w-w}{(t_0-2)^3},
\]
with domain
\[
D(B_h)=\Bigl\{w\in C_0(\mathbb R_*,X):
\lim_{t_0\to 2^+}\frac{T_{t_0}w-w}{(t_0-2)^3}\ \text{exists in }C_0(\mathbb R_*,X)\Bigr\}.
\]

Now, if we define the additive time variable $t:=\mu(t_0)=(t_0-2)^3$ and
\[
V(t,s):=U\big(\mu^{-1}(t),\mu^{-1}(s)\big)
=U\big(2+\sqrt[3]{t},\,2+\sqrt[3]{s}\big),\qquad t\ge s.
\]
Then $V$ is an exponentially bounded evolution family on $X$ and its associated
(classical) evolution semigroup $\{S_t\}_{t\ge0}$ on $C_0(\mathbb R,X)$ is
\[
(S_t v)(s)
=V(s,s-t)\,v(s-t).
\]

As shown in Theorem \ref{AgenB}, we have the conjugacy
\[
T_{t_0}
=\mathcal{F}^{-1}S_{\mu(t_0)} \mathcal{F},\qquad
B_h=\mathcal{F}^{-1} A \mathcal{F},
\]
where $A$ is the generator of $\{S_t\}_{t\ge0}$ and
$\mathcal{F}: C_0(\mathbb R_*,X)\to C_0(\mathbb R,X)$ is given by $(\mathcal{F}w)(s)=w(\mu^{-1}(s))$.

For  $h$, one has
\[
\frac{h'(t)}{h(t)}=\frac{3(t-2)^2 e^{(t-2)^3}}{e^{(t-2)^3}}
=3(t-2)^2,
\]
so the invariant measure on $\mathbb R_*$ is
$
d\mu_*(\tau)=3(\tau-2)^2\,d\tau.
$
Thus, the representation of $B_h^{-1}$ in Theorem~\ref{h-dichotomy} becomes
\[
(B_h^{-1}g)(t_0)
=-\int_{\mathbb R}\Gamma_h(t_0,s_0)\,g(s_0)\,3(s_0-2)^2\,ds_0,
\qquad t_0\in\mathbb R_*,
\]
where
\begin{displaymath}
\Gamma_h(t_0,s_0)=\left\{\begin{array}{rcl}
U(t_0,s_0)P_h(s_0), & & t_0>s_0,\\
-\,U_{Q_h}(t_0,s_0)^{-1}Q_h(s_0), & & t_0<s_0.
\end{array}\right.
\end{displaymath}

\end{example}

\section*{Data Availability Statement}
Data sharing not applicable to this article as no datasets were generated or analysed during the current study.

\section*{Conflict of interest}
Not applicable.

\end{document}